\definecolor{gruen}{cmyk}{1.0,0.2,0.7,0.07}
\definecolor{mag}{cmyk}{0.0,0.9,0.3,0.0}
\theoremstyle{plain}
\newtheorem{theorem}{Theorem}[section]
\newtheorem{lemma}[theorem]{Lemma}
\newtheorem{corollary}[theorem]{Corollary}
\newtheorem{conjecture}[theorem]{Conjecture}
\newtheorem{observation}[theorem]{Observation}
\theoremstyle{definition} 
\newtheorem{definition}[theorem]{Definition}
\begin{document}

\title{Completing partial Latin squares with two filled rows and three
filled columns\footnote{This paper is based on the Bachelor thesis 
\cite{Goransson} by G\"oransson
written under the supervision of Casselgren.}}

\author{
{\sl Carl Johan Casselgren}\thanks{{\it E-mail address:} 
carl.johan.casselgren@liu.se. \, Casselgren was supported by a grant from the
Swedish Research Council (2017-05077)}\\ 
Department of Mathematics \\
Link\"oping University \\ 
SE-581 83 Link\"oping, Sweden
\and
{\sl Herman G\"oransson}\thanks{{\it E-mail address:} 
 herman.goransson@gmail.com}\\ 
Department of Mathematics \\
Link\"oping University \\ 
SE-581 83 Link\"oping, Sweden
}

\maketitle

%%%%%%%%%%%%%%%%%%%%%%%%%%%%%%%%%%%%%%%%%%%%%%%%%%%%%
\begin{abstract}
	Consider a partial Latin square
	$P$ where the  first two  rows and first three columns
	are completely filled, and every other cell of $P$ is empty.
	It has been conjectured that all such partial Latin squares
	of order at least $8$ are completable.
	Based on a technique by Kuhl and McGinn we describe a framework
	for completing partial Latin squares in this class.
	Moreover,
	we use our method for proving that all  partial Latin squares
	from this family,
	where the intersection of the nonempty rows and columns
	form a Latin rectangle with three distinct symbols, is completable.
\end{abstract}

%%%%%%%%%%%%%%%%%%%%%%%%%%%%%%%%%%%%%%%%%%%%%%%%%%%
\textit{Keywords: Latin square, partial Latin square, completing
partial Latin squares.}\\

\section{Introduction}
	Consider an $n \times n$ array $P$
	where each cell contains at most one symbol from $[n]=\{1, \dots,n \}$.
	$P$ is called a \emph{partial Latin square}
	if each symbol occurs at most once in every row and column.
	If no cell in $P$ is empty, then it is a \emph{Latin square}.
	An $r \times s$ array with entries
	from $\{1,\dots,n\}$, where $n = \max\{r,s\}$,
	is called a \emph{Latin rectangle} if each symbol occurs
	at most once in every row and column, and no cell is empty.

	The cell in position $(i,j)$ in an array $A$ is denoted by $(i,j)_A$,
	and the symbol in cell $(i,j)_A$ is denoted by $A(i,j)$;
	if $A(i,j)=k$, then $k$ is an \emph{entry}
	of cell $(i,j)_A$; 
	we write $A(i,j) = \emptyset$ if $(i,j)_A$ is empty.

	An $n \times n$ Latin square $L$ is a \emph{completion}
	of an $n \times n$ partial Latin square $P$ if $L(i,j) = P(i,j)$ 
	for each nonempty cell $(i,j)_P$ of $P$.
	$P$ is \emph{completable} if there is such a Latin square;
	otherwise, $P$ is {\em non-completable}.
	The problem of completing partial Latin squares
	is a classic within combinatorics and 
	several families of partial Latin squares have been proved
	to admit completions. Let us here
	just mention a few classic and recent results.

	In general, it is an $NP$-complete problem to determine
	if a partial Latin square is completable \cite{Colbourn}.
	Thus it is natural to ask for completability of particular families of
	partial Latin squares. 
	A classic result due to Ryser \cite{Ryser} 
	states that if $n \geq r,s$, then
	every $n \times n$ partial Latin square whose nonempty cells form
	an $r \times s$ subrectangle $Q$ is completable if and only if each
	of the symbols $1,\dots,n$ occurs at least $r+s-n$ times in $Q$.
	Another classic result
	is Smetaniuk's proof \cite{Smetaniuk}
	of Evans' conjecture \cite{Evans} that every 
	$n \times n$ partial Latin square
	with at most $n-1$ entries is completable.
	This was also independently proved by Andersen
	and Hilton \cite{AndersenHilton}.
	
	Adams, Bryant and Buchanan \cite{AdamsBryantBuchanan} characterized which
	partial Latin squares with 
	$2$ completely filled rows and columns, 
	and where all other cells are empty, are completable,
	and by results of Casselgren and H\"aggkvist \cite{CasselgrenHaggkvist},
	and Kuhl and Schroeder \cite{KuhlSchroeder},
	all partial Latin squares of order at least $6$
	with all entries in one fixed column or row, or containing a
	prescribed symbol, are completable.

	The result 
	that all partial Latin squares with two filled rows
	and two filled columns of order at least $6$ are completable
	was first proved in Buchanan's
	PhD thesis \cite{Buchanan}; the shortened version in \cite{AdamsBryantBuchanan}
	is still over 25 pages long and also relies on a computer
	search for verifying completability for small orders.
	Quite recently, Kuhl and McGinn \cite{KuhlMcGinn} gave a short proof 
	of this result based on Smetaniuk's aforementioned proof
	of the famous Evans'
	conjecture.
	They also presented a conjecture on completing partial Latin squares
	with two filled rows and any number of filled columns.
	For the case of three filled columns their conjecture
	reads as follows.

	\begin{conjecture}
	\label{conj:general}
		Every partial Latin square of order at least $8$
		with two completely filled rows and three completely filled columns,
		and where all other cells are empty, is completable.
	\end{conjecture}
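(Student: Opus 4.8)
The plan is to strip away the two completely filled rows and the three completely filled columns and thereby reduce the whole problem to filling a single empty block. Deleting rows $1,2$ and columns $1,2,3$ leaves an entirely empty $(n-2)\times(n-3)$ array $R$ (rows $3,\dots,n$ and columns $4,\dots,n$), and completing $P$ is equivalent to filling $R$ so that row $i$ receives exactly the set $S_i$ of $n-3$ symbols absent from the three prescribed entries of that row, while column $j$ receives exactly the set $T_j$ of $n-2$ symbols absent from its two prescribed entries. Since $R$ has $n-3$ columns and $n-2$ rows, these cardinalities match the dimensions of $R$, so the task becomes a Latin-rectangle-type filling with prescribed row and column palettes, and any such filling together with the prescribed cells is automatically a Latin square.

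Next I would verify the obvious necessary balance conditions and then recast the filling as a decomposition problem. For a symbol $k$ let $A_k\subseteq\{3,\dots,n\}$ be the rows that still need $k$ (those with $k\in S_i$) and $B_k\subseteq\{4,\dots,n\}$ the columns that still need $k$ (those with $k\in T_j$). A routine double count, keeping track of how often $k$ occurs in the prescribed cells, shows $|A_k|=|B_k|$ for every $k$; in the completed square $k$ must occupy exactly one cell of $R$ in each row of $A_k$ and each column of $B_k$, so the occurrences of $k$ in $R$ must form a perfect matching $M_k$ between $A_k$ and $B_k$. Filling $R$ is therefore exactly a partition of the cells of the complete bipartite graph on $\{3,\dots,n\}$ and $\{4,\dots,n\}$ into matchings $\{M_k\}_{k\in[n]}$, where $M_k$ saturates $A_k$ and $B_k$; equivalently, a proper list edge-colouring in which row $i$ uses palette $S_i$ and column $j$ uses palette $T_j$.

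To build such a decomposition I would follow the Smetaniuk-style technique underlying Kuhl and McGinn's argument: fix a transversal or anti-diagonal of $R$, commit one symbol (or one line of $R$) along it, and shift the remaining entries so that the leftover instance is a strictly smaller problem of the same type, allowing an induction on $n$ with an explicit finite check for the base orders $n=8,9$. The hard part will be guaranteeing, at each step, the Hall-type condition that the partial matchings extend simultaneously without stranding any symbol. This is delicate precisely because the palettes $S_i$ and $T_j$ can be highly non-uniform: when the $2\times 3$ corner rectangle uses many distinct symbols, some symbols $k$ have very small $A_k,B_k$ with strongly correlated forbidden positions, and it is these imbalanced configurations that resist a uniform greedy argument.

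For this reason I would first settle the most symmetric case, in which the $2\times 3$ intersection is a Latin rectangle on only three distinct symbols. There the structure of the forbidden positions coming from the corner is as simple as possible, so the Hall-type conditions ought to be the most transparent and the Smetaniuk induction should go through cleanly; the cases of four, five, and six distinct corner symbols, together with the small-order base cases, would then remain as the genuinely harder work.
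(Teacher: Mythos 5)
The statement you set out to prove is a conjecture, and the paper itself does not prove it either: it only establishes the special case where the $2\times 3$ intersection of the filled rows and columns is a Latin rectangle on three distinct symbols, via the Kuhl--McGinn reduction machinery together with a computer search for small orders. Your proposal, however, does not constitute a proof of anything --- neither the full conjecture nor that special case. Your opening reformulation is correct but routine: deleting the filled lines and recasting completion as a partition of the empty $(n-2)\times(n-3)$ block into matchings $M_k$ saturating $A_k$ and $B_k$ (equivalently, a list edge-colouring of a complete bipartite graph with prescribed row and column palettes), including the balance count $|A_k|=|B_k|=n-5+c_k$ where $c_k$ counts occurrences of $k$ in the corner. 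The genuine gap is everything after that. The ``Smetaniuk-style'' induction is never specified: you do not say what is committed along the transversal, how the remaining entries are shifted, or why the leftover instance is again of the same type; and the Hall-type extendability condition that you yourself identify as ``the hard part'' is precisely where a proof would have to do its real work --- you acknowledge it is delicate and then move on. Note also that Smetaniuk's and Kuhl--McGinn's techniques do not operate on your palette/matching formulation at all: they operate on the partial Latin square itself (a triangular structure below an augmented diagonal, and row/column/symbol reductions whose completability propagates back to the original), so it is not even clear that your reformulation connects to the method you invoke.

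Your closing paragraph, which defers to the three-symbol-corner case on the grounds that there ``the Smetaniuk induction should go through cleanly,'' substantially underestimates that case: in the paper it is the entire content of the main theorem, whose proof passes to the row-symbol conjugate, builds a chain of seven auxiliary partial Latin squares linked by intercalate swaps, invokes a lemma on completing partial Latin squares with $r$ full columns plus one partially filled column, uses structural properties of the Smetaniuk completion of $T^2(P)$, and still requires a computer search for the orders $8$, $9$ and $11$. Nothing in your sketch supplies these ingredients or any substitute for them, so what you have is a research plan with a correct but standard reformulation at its head, not a proof.
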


		The non-completable partial Latin squares in Figure \ref{fig}
	show that the condition
	$n \geq 8$ in Conjecture \ref{conj:general} is necessary.

\begin{figure}[h]
\label{fig}
\begin{center}
      \begin{tabular}{c c c}
      \begin{tabular}{|c|c|c|c|c|}
	\hline	1 & 2 & 3  & 4 & 5 \\
	\hline	2 & 4 & 5 & 3 & 1  \\
	\hline	3 & 5 & 1 & & \\
	\hline	4 & 3 & 2 & & \\
	\hline	5 & 1 & 4 & & \\
	\hline
    
    \end{tabular} 
       & \quad
       \begin{tabular}{|c|c|c|c|c|c|}
	\hline	1 & 2 & 3 & 4 & 5 & 6  \\
	\hline	2 & 6 & 1 & 5 & 4 & 3 \\
	\hline	3 & 5 & 4 &  & &\\
	\hline  4 & 3 & 5 &  & &\\
	\hline  5 & 4 & 6 &  & &\\
	\hline  6 & 1 & 2 &  & &\\
	\hline
    
    \end{tabular} 
        & \quad
      \begin{tabular}{|c|c|c|c|c|c|c|}
	\hline	1 & 2 & 3 & 4 & 5 & 6 & 7  \\
	\hline	2 & 1 & 7 & 6 & 4 & 5 & 3 \\
	\hline	3 & 7 & 2 &  & & &\\
	\hline  4 & 5 & 6 &  & & & \\
	\hline  5 & 6 & 4 &  & & & \\
	\hline  6 & 4 & 5 &  & & & \\
	\hline  7 & 3 & 1 &  & & & \\
	\hline
    
    \end{tabular}

       \end{tabular}
       \end{center}
    \caption{Non-completable partial Latin squares of order $3, 4$ and $5$.}
\end{figure}

	In this paper, we take the first step towards settling Conjecture
	\ref{conj:general} by
	proving it in the special case
	when the intersection of the nonempty rows and columns form 
	a Latin rectangle of order $3$; that is, it contains only three
	distinct symbols.
	
	Our proof of this result
	employs methods from \cite{KuhlMcGinn};
	in fact,
	based on the techniques from that paper we shall present a
	general framework for completing partial Latin squares
	with two filled rows and three filled columns. We then use
	this framework for giving a short proof of the fact that
	all such partial Latin squares where the intersection of the
	filled rows and columns form a Latin rectangle of order $3$
	are completable.

	In Section 2 we review some material from \cite{KuhlMcGinn} 
	and introduce some additional tools,
	and
	in Section 3 we present our method for completing partial Latin
	squares with two filled rows and three filled columns
	and prove a special case of Conjecture \ref{conj:general}.

	\section{Preliminaries}
	
	Two partial Latin squares $P$ and $P'$ are {\em isotopic}
	if $P'$ can be obtained from $P$ by permuting
	rows, permuting columns and/or permuting symbols in $P$. Note that
	if $P$ and $P'$ are isotopic, then $P$ is completable
	if and only if $P'$ is completable.
	
	A partial Latin square $P$ of order $n$ can  equivalently be described
	as a subset
	of $[n] \times [n] \times [n]$, where $(r,c,s) \in P$ if and only if
	$s=P(r,c)$. We shall swap freely between this representation and
	the array representation of partial Latin squares.
	
	A {\em conjugate} of $P$ is an array in which the coordinates
	of each triple $(r,c,s)$
	of $P$ are uniformly permuted according to one of the following  six 
	ways: $$(r,c,s), (c,r,s), (s,c,r), (c,s,r), (r,s,c), (s,r,c).$$
	If $P$ is a partial Latin square, then any conjugate of $P$ is a partial
	Latin square as well. Moreover, any conjugate of $P$ is completable if and only if
	$P$ is.
		
	An \emph{intercalate} in an $n \times n$ partial Latin square $L$ is a set 
	$$C =\{ (r_1,c_1)_L, (r_1,c_2)_L, (r_2,c_1)_L, (r_2,c_2)_L \}$$
	of cells in $L$ such that $$L(r_1,c_1)=L(r_2,c_2)=s_1 \text{ and }
	L(r_1,c_2)=L(r_2,c_1)=s_2.$$
	A \emph{swap on $C$}
 	is the operation $L \mapsto L'$, where $L'$ is an $n \times n$ partial
	Latin square with 
	$$L'(r_1,c_1)=L'(r_2,c_2)=s_2, \, L'(r_1,c_2)=L'(r_2,c_1)=s_1,$$
	and $L'(i,j)=L(i,j)$ for all other $(i,j)$.
	
	We shall need the following well-known theorem 
	first proved by M. Hall \cite{MHall}.
	
	\begin{theorem}
	\label{th:MHall}
			Every partial Latin square of order $n$ with 
			$r \leq n$ completely filled columns and no other filled cells
			is completable.
	\end{theorem}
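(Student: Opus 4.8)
The plan is to build up the Latin square one column at a time, starting from the given filled part. Observe first that the $r$ filled columns constitute an $n \times r$ Latin rectangle: each of these columns is a permutation of $[n]$, and consequently each row contains $r$ distinct symbols. It therefore suffices to establish the following extension step: any $n \times k$ Latin rectangle with $k < n$ can be extended to an $n \times (k+1)$ Latin rectangle by legally filling in column $k+1$. Applying this step repeatedly for $k = r, r+1, \dots, n-1$ produces a completion of $P$, so the theorem follows by iterating the extension.

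For the extension step, I would recast the problem as finding a system of distinct representatives. Let $L$ be an $n \times k$ Latin rectangle, and for each row $i \in [n]$ let $A_i \subseteq [n]$ denote the set of symbols that do not yet occur in row $i$; note that $|A_i| = n - k$. To fill column $k+1$ legally I must choose a symbol $a_i \in A_i$ for every row $i$ in such a way that the chosen symbols $a_1, \dots, a_n$ are pairwise distinct, so that column $k+1$ itself contains no repeated symbol. Equivalently, I seek a perfect matching in the bipartite graph $G$ whose two parts are the rows and the symbols, with row $i$ adjacent to symbol $s$ precisely when $s \in A_i$. Such a matching assigns to each row a symbol missing from it, with distinct symbols in distinct rows, which is exactly a valid $(k+1)$-st column.

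The heart of the argument is showing that $G$ admits a perfect matching, for which I would verify Hall's condition. The key observation is a double count: since each of the $k$ already-filled columns is a permutation of $[n]$, every symbol $s$ occurs in exactly $k$ rows, hence is missing from exactly $n - k$ rows. Thus each symbol lies in $A_i$ for exactly $n - k$ values of $i$, so $G$ is $(n-k)$-regular. For any set $S$ of rows we then obtain
$$\Big| \bigcup_{i \in S} A_i \Big| \geq \frac{\sum_{i \in S} |A_i|}{n-k} = \frac{|S|(n-k)}{n-k} = |S|,$$
so Hall's condition holds and the required matching exists (equivalently, every regular bipartite graph has a perfect matching). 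This produces the new column while preserving the Latin rectangle property, and iterating completes the square. I expect the only genuinely delicate point to be this regularity/Hall's-condition verification via the double count; the remainder is bookkeeping, and the existence of the matching is precisely where the classical marriage theorem does the work.
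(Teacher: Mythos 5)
Your proof is correct: the column-by-column extension of a Latin rectangle via Hall's marriage theorem, using the $(n-k)$-regularity of the row--symbol bipartite graph to verify Hall's condition, is exactly the classical argument for this result. Note that the paper itself offers no proof --- it cites M. Hall's 1945 theorem as known --- and your argument is essentially the one in that original reference, so there is nothing to reconcile.
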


	We shall need some further auxiliary results; the following
	lemma is a simple
	consequence of Hall's condition for matchings in bipartite graphs.
	Denote by $\delta(G)$ the minimum degree of a graph $G$.
	
	\begin{lemma}
	\label{lem:Hall}
		If $B$ is a balanced bipartite graph with parts $V_1$ and $V_2$,
		and $\delta(B) \geq \frac{|V_1|}{2}$, then $B$ has a perfect matching.
	\end{lemma}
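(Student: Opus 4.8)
The plan is to reduce the statement to Hall's marriage theorem. Writing $n = |V_1| = |V_2|$ (the parts have equal size since $B$ is balanced), a balanced bipartite graph has a perfect matching precisely when Hall's condition $|N(S)| \geq |S|$ holds for every $S \subseteq V_1$, where $N(S)$ denotes the set of neighbors of $S$ in $V_2$. So the whole proof amounts to verifying this inequality for an arbitrary subset $S \subseteq V_1$, and the minimum degree hypothesis $\delta(B) \geq n/2$ is exactly the tool I would leverage to do so.

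I would split the verification into two regimes according to the size of $S$. First, if $|S| \leq n/2$, then choosing any single vertex $v \in S$ and using $|N(S)| \geq |N(v)| = \deg(v) \geq n/2 \geq |S|$ yields Hall's condition immediately. Second, if $|S| > n/2$, I would argue that in fact $N(S) = V_2$, which trivially gives $|N(S)| = n \geq |S|$. To establish $N(S) = V_2$, suppose toward a contradiction that some $w \in V_2$ has no neighbor in $S$; then all of its $\deg(w) \geq n/2$ neighbors must lie in $V_1 \setminus S$, a set of size $n - |S| < n/2$, which is impossible.

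The argument is short, and the only point requiring a little care is the boundary case $|S| = n/2$ (relevant when $n$ is even), but this is absorbed into the first regime precisely because the degree bound is stated with a non-strict inequality. I do not anticipate a genuine obstacle here: the lemma is essentially a packaging of the standard ``minimum degree at least half'' sufficient condition for a perfect matching in balanced bipartite graphs, and the two-case split above is the natural route for extracting Hall's condition from the degree hypothesis.
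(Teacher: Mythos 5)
Your proof is correct and follows exactly the route the paper intends: the paper states this lemma without proof, noting only that it is ``a simple consequence of Hall's condition for matchings in bipartite graphs,'' and your two-case verification of Hall's condition (small $S$ via the degree of a single vertex, large $S$ via $N(S)=V_2$) is precisely the standard way to fill in that remark.
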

	
	This lemma enables us to prove the following.
	
	\begin{lemma}
	\label{lem:PLS}
		Let $P$ be an $n \times n$ partial Latin square with $r$ completely
		filled columns, one partially filled column with $s$ filled
		cells and where all other columns are empty. If $n \geq 2r +s$,
		then $P$ is completable.
	\end{lemma}
	\begin{proof}
		Without loss of generality, we assume that the cells
		in rows $1,\dots, n-s$ of the partially filled column $c$ of $P$ are empty,
		and	that symbols $1,\dots, n-s$
		do not appear in column $c$ of $P$.
	
		We form a bipartite graph $B$ with parts 
		$V_1= \{r_1, r_2, \dots, r_{n-s}\}$
		and $V_2 = \{1,\dots, n-s\}$, and where $r_i j \in E(B)$ if and only
		if symbol $j$ does not appear in row $i$ of $P$.
		Now, $d_B(r_i) \geq n-s-r$, and $d_B(j) \geq n-s-r$, since there are at most
		$r$ different symbols in each of the $n-s$ first rows of $P$, 
		and each of the symbols $1,\dots, n-s$ appears in at most
		$r$ different rows. Thus $\delta(B) \geq n-s-r \geq \frac{n-s}{2}$, by
		assumption; so by Lemma \ref{lem:Hall}, $B$ contains a perfect matching $M$.
		Now, for each empty cell $(i,c)_P$ in column $c$ of $P$
		we assign the symbol $j$ satisfying that $r_i j \in M$ to $(i,c)_P$; the obtained
		partial Latin square $P'$ 
		has $r+1$ completetely filled columns and all other cells of $P'$
		are empty. 
		Thus, by Theorem \ref{th:MHall}, $P'$ is completable, and so,
		$P$ has a completion.		
	\end{proof}
	
	Finally, we shall need the following result
	proved by H\"aggkvist; see e.g. \cite{Asratian}.
	We denote by $\mathrm{PLS}(a,b;n)$ the set of all $n \times n$ partial Latin
	squares
	with $a$ completely filled rows and $b$ completely filled
	columns, and where all other cells are empty.
	
	\begin{theorem}
	\label{th:Haggkvist}
		If $P \in \mathrm{PLS}(b,b;n)$ is a partial Latin square
		where the cells in the intersection of the filled rows and columns
		form a Latin square, then $P$ is completable.
	\end{theorem}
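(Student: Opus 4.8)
The plan is to reduce the problem to filling a single empty block, and then to fill that block by a sequence of matchings. Since completability is preserved under permuting rows, columns and symbols, I would begin by arranging that the filled rows and columns are the first $b$ of each, and then permute symbols so that the $b\times b$ intersection block $A$ (in rows $1,\dots,b$ and columns $1,\dots,b$) is a Latin square on $\{1,\dots,b\}$. Because each of the first $b$ rows is a permutation of $[n]$ whose first $b$ entries already exhaust $\{1,\dots,b\}$, the block $B$ lying in rows $1,\dots,b$ and columns $b+1,\dots,n$ uses only the symbols $\{b+1,\dots,n\}$; as $P$ is a partial Latin square, $B$ is in fact a $b\times(n-b)$ Latin rectangle. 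Symmetrically, the block $C$ in rows $b+1,\dots,n$ and columns $1,\dots,b$ has each column a permutation of $\{b+1,\dots,n\}$, so its transpose $C^{\top}$ is a Latin rectangle. Since each column of $B$ consists of $b$ distinct symbols drawn from the $n-b$ symbols $\{b+1,\dots,n\}$, we must have $n\geq 2b$; if $n<2b$ no such $P$ exists and the statement is vacuous. Writing $m=n-b\geq b$, the only empty cells of $P$ form the $m\times m$ block $D$ in rows and columns $b+1,\dots,n$, and it remains to fill $D$.

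Next I would reformulate the filling of $D$. Let $R_i$ denote the set of symbols in row $i$ of $C$ and $S_j$ the set of symbols in column $j$ of $B$. A completion of $P$ is exactly a filling of $D$ in which row $i$ receives the symbol set $U_i=[n]\setminus R_i$ and column $j$ receives the symbol set $V_j=[n]\setminus S_j$, each symbol once; note $|U_i|=|V_j|=m$, and any such filling automatically yields a Latin square. The key structural fact, guaranteed by the Latin-rectangle structure of $B$ and of $C^{\top}$, is a \emph{balance} condition: every symbol lies in exactly as many of the sets $U_i$ as of the sets $V_j$. Indeed, each symbol $s\leq b$ lies in every $U_i$ and every $V_j$, so it must form a full permutation matrix inside $D$ (it occurs in $A$ but in neither $B$ nor $C$), while each symbol $t>b$ is excluded precisely from the $b$ rows of $C$ and the $b$ columns of $B$ in which it occurs, and hence lies in exactly $m-b$ of the sets $U_i$ and $m-b$ of the sets $V_j$.

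Finally I would fill $D$ by placing the symbols one at a time, each as a (partial) permutation matrix inside its currently allowed cells, obtaining the required matchings from Hall's marriage theorem in the spirit of Lemma~\ref{lem:Hall}; equivalently, one properly edge-colours the complete bipartite graph $K_{m,m}$ subject to the prescribed row and column constraints $U_i,V_j$. The main obstacle is precisely the verification of Hall's condition at each stage: after some symbols have been placed, one must show that the cells still available to the next symbol always admit a matching saturating exactly the rows and columns that still require it. I expect this to be the crux of the argument, since the empty block $D$ is the \emph{large} part of the square (of side $m\geq n/2$), so no ``small hole'' completion result applies. I would handle it by exploiting the balance condition together with the regularity inherited from the Latin rectangles $B$ and $C^{\top}$, which keep the relevant bipartite graphs sufficiently balanced for a marriage-type condition to hold; once the matchings are shown to exist, peeling them off fills $D$ and completes $P$.
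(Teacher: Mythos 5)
Your reduction is sound as far as it goes: normalizing so that the filled rows and columns are the first $b$ of each with the intersection block $A$ a Latin square on $\{1,\dots,b\}$, observing that $B$ and $C^{\top}$ are Latin rectangles on $\{b+1,\dots,n\}$, deducing $n\geq 2b$, and establishing the balance condition (symbols $s\leq b$ missing from every row and column of the empty block $D$; symbols $t>b$ missing from exactly $m-b$ rows and $m-b$ columns, where $m=n-b$) are all correct. Be aware, however, that the paper does not prove this statement at all: it is H\"aggkvist's theorem, quoted from the literature (see \cite{Asratian}), so what you are attempting is a reproof of a known, nontrivial result rather than a reconstruction of an argument in the paper.

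The genuine gap is that the step you defer -- verifying that ``the cells still available to the next symbol always admit a matching saturating exactly the rows and columns that still require it'' -- is not a technical detail but the entire content of the theorem, and the greedy mechanism you propose is false as stated. Placing symbols one at a time, each via an arbitrary matching supplied by a marriage-type condition, can dead-end even in the smallest cases. Take $b=1$, $n=3$, with first row $(1,2,3)$ and first column $(1,2,3)^{\top}$: here $D$ is the $2\times 2$ block in rows and columns $2,3$, and the unique completion forces symbol $1$ into cells $(2,3)$ and $(3,2)$. Yet the bipartite graph for symbol $1$ (which must form a full permutation matrix in $D$) also admits the matching $\{(2,2),(3,3)\}$; choosing it leaves symbol $3$, which must still appear in row $2$ and column $2$, with no available cell, so Hall's condition fails at the very next stage. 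Hence the invariant you hope to maintain is simply not preserved under arbitrary matching choices: a correct proof must either select the matchings subject to a global invariant (as in Ryser-style column-by-column extensions with counting conditions) or proceed by a different route altogether. Note also that Lemma \ref{lem:Hall} cannot drive such an argument: for a symbol $t>b$ one needs a matching saturating a \emph{prescribed} set of $m-b$ rows and a prescribed set of $m-b$ columns, and once several symbols have been placed the relevant bipartite graphs are neither balanced nor dense enough for a minimum-degree bound of that form to apply.
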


	\subsection{Smetaniuk completion}
	
		A main ingredient in Smetaniuk's resolution
		of the Evans' conjecture is what we call the {\em Smetaniuk completion}
		of a partial Latin square.
		Below we briefly review this technique along with its generalization
		by Kuhl and McGinn \cite{KuhlMcGinn}.

	If $P$ is a partial Latin square of order $n$, then the set
	$D = \{(i,i)_P, i \in [n]\}$ is called the {\em forward diagonal}
	of $P$. A cell $(r,c)_P$ of $P$ lies {\em below} $D$ if $c <r$;
	the cell is {\em above} $D$ if is neither below $D$, nor in $D$.
	
	For a partial Latin square $P$ of order $n$, we define a new partial
	Latin square $T(P)$ of order $n+1$ by setting 
	$$T(P) = \{(r+1,c,s) : (r,c,s) \in P, c < r\} \cup 
	\{(i,i,n+1) : i \in [n+1]\}.$$
	Note that all cells above the forward diagonal of $T(P)$ are empty.
	
	\begin{theorem} (Smetaniuk completion \cite{Smetaniuk})
	\label{th:Smet}
		If $P$ is a completable partial Latin square, then $T(P)$ is completable.
	\end{theorem}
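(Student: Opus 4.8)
The plan is to use the hypothesised completion $L$ of $P$ as a template and build the order $(n+1)$ square around it. Write $L$ for an $n\times n$ Latin square on $[n]$ with $L(r,c)=P(r,c)$ on the filled cells of $P$. I would first enlarge the set of forced entries: define a partial Latin square $Q$ of order $n+1$ by
$$Q(i,j)=L(i-1,j)\ \ (1\le j\le i-1\le n),\qquad Q(i,i)=n+1\ \ (1\le i\le n+1),$$
that is, put $n+1$ along the diagonal and the whole lower triangle of $L$ shifted down one row beneath it. Since $L$ extends $P$, one checks that $Q\supseteq T(P)$: the diagonal triples agree, and each triple $(r,c,s)\in P$ with $c<r$ gives $Q(r+1,c)=L(r,c)=s$. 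A quick scan of rows and columns shows $Q$ is a genuine partial Latin square, since row $i$ carries the first $i-1$ entries of row $i-1$ of $L$ together with $n+1$, and column $j$ carries entries $j,\dots,n$ of column $j$ of $L$ together with $n+1$, so no symbol repeats. Thus it suffices to complete $Q$, whose empty cells are exactly those strictly above the diagonal.

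Next I would pin down what must go there. A counting argument line by line shows that in any completion the empty cells of row $i$ must receive exactly the symbols $R_i=\{L(i-1,c):i\le c\le n\}$ (with $R_1=[n]$), and the empty cells of column $j$ exactly $C_j=\{L(r,j):1\le r\le j-1\}$ (with $C_{n+1}=[n]$); the cardinalities match the number of empty cells in each line. Completing $Q$ is therefore equivalent to filling the triangular staircase of empty cells so that row $i$ realises $R_i$ and column $j$ realises $C_j$. I would do this one superdiagonal at a time, $d=1,2,\dots,n$, filling the cells $(i,i+d)$: for fixed $d$ form a bipartite graph joining each cell of the superdiagonal to the symbols still legal for it (absent so far from its row and column and lying in $R_i\cap C_{i+d}$), and extract a perfect matching via Hall's theorem (or Lemma \ref{lem:Hall}). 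Each matching assigns a consistent batch of symbols, and after the last superdiagonal every line is complete.

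The main obstacle is exactly the global consistency of this greedy sweep: a locally legal assignment on an early superdiagonal can in principle leave a later superdiagonal unmatchable. The heart of the argument---the part that makes this Smetaniuk's theorem rather than a one-line application of Hall---is to show that the process never gets stuck, i.e.\ that at every stage the remaining empty region still admits a completion respecting the $R_i$ and the $C_j$. I would control this either by verifying Hall's condition for each superdiagonal against an invariant maintained throughout the sweep, or, following the swap-based viewpoint emphasised in this paper, by filling the cells one at a time and repairing any conflict with an intercalate swap in the already-built part, the crux then being to guarantee that a suitable intercalate is always available. This is where essentially all the work lies, and I would expect to lean on the rigidity of the sets $R_i$ and $C_j$ inherited from a bona fide Latin square $L$ to push it through.
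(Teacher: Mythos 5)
The paper does not prove Theorem \ref{th:Smet} at all --- it is quoted from Smetaniuk's paper --- so your attempt must be measured against the known proof, and against the standard of being a complete argument. Your reduction is correct and matches the first step of Smetaniuk's proof: passing to a completion $L$ of $P$, defining $Q$ (diagonal of $(n+1)$'s plus the shifted lower triangle of $L$), checking $Q \supseteq T(P)$ and that $Q$ is a partial Latin square, and computing the forced symbol sets $R_i$ and $C_j$ for the empty rows and columns. All of that is sound bookkeeping.

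The genuine gap is that you then stop exactly where the theorem begins. Showing that $Q$ --- the lower triangle of an \emph{arbitrary} Latin square $L$ bordered by a diagonal of $(n+1)$'s --- is always completable \emph{is} Smetaniuk's theorem; everything before it is routine. Your proposed mechanism, a superdiagonal-by-superdiagonal sweep with a Hall matching at each stage, is not known to work: Hall's condition for a later superdiagonal can fail after legal but unlucky choices on earlier ones, and you supply no invariant that prevents this, conceding yourself that ``this is where essentially all the work lies.'' A proof sketch that names the crux and offers two unexecuted strategies for it is not a proof. For the record, Smetaniuk's actual argument is structured differently: the empty triangle is filled one line (row/column) at a time, inserting the symbols of the corresponding line of $L$; when a newly inserted symbol conflicts with an entry already present in its line, it is exchanged with the symbol $n+1$ in that line, which displaces $n+1$ to another cell, forcing a further exchange, and so on. The heart of the proof is showing that this chain of exchanges never revisits a cell, hence terminates, and that the Latin property is restored when it does. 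Nothing in your proposal substitutes for that termination argument, so the attempt is incomplete.
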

	
	In \cite{KuhlMcGinn}, the authors generalize 
	the above ideas as follows.
	Let $P$ be a partial Latin square of order $n$.
	If $n$ is odd, then the {\em forward augmented diagonal} $D^2$ of $P$
	is defined as the set $$D^2=\{(i,i)_P :
	i \in \{4, 6, 8, \dots,n-1\}\} \cup 
	\{(1,1)_P, (2,1)_P, (3,2)_P, (3,3)_P\};$$
	if $n$ is even, then the forward augmented diagonal is defined as the set
	$$D^2= \{(i,i)_P, (i,i+1)_P, (i+1,i)_P, (i+1,i+1)_P : i \in\{1,3,5,\dots,n-1\}\}.$$
	The properties for a cell of lying below or above the augmented forward
	diagonal is defined analogously as above.
	
	For a partial Latin square $P$ of order $n$  we define a partial Latin
	square $T^2(P)$ of order $n+2$,
	with augmented forward diagonal
	$D^2$,
	by setting
	\begin{itemize}
		
		\item[(i)] $T^2(P)(i,j) = P(i-2,j)$, if $(i,j)_{T^2(P)}$ 
		lies below $D^2$ of $T^2(P)$,
	
		\item[(ii)] $T^2(P)(i,j) \in \{n+1, n+2\}$ if $(i,j)_{T^2(P)} \in D^2$, and
		
		\item[(iii)] the cells of $T^2(P)$ above $D^2$ are empty.
	
	\end{itemize}
	Note that the augmented forward diagonal of $T^2(P)$ is uniquely defined
	up to switching symbols on subarrays of $D^2$. As in \cite{KuhlMcGinn},
	since this suffices for our purposes, we shall be content with this
	definition. Moreover,	
	in \cite{KuhlMcGinn} the authors worked with the (augmented) back diagonal
	rather than the (augmented) forward diagonal. By isotopy, this makes no
	difference for the purpose of completability; thus, since
	the augmented forward diagonal is better suited for our purposes, we reformulate
	the results of \cite{KuhlMcGinn} to this setting. Hence, by isotopy,
	we have the following.
	
	\begin{theorem} 
	\cite{KuhlMcGinn}
	\label{th:T2}
		If $P$ is a completable partial Latin square of order $n$,
		then $T^2(P)$ is completable
		(for any choice of the augmented forward diagonal satisfying (ii)).
	\end{theorem}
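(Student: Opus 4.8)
The plan is to prove Theorem~\ref{th:T2} by an explicit completion that generalizes the single-step Smetaniuk insertion of Theorem~\ref{th:Smet} from a width-one diagonal to the width-two band $D^2$, inserting the two new symbols $n+1$ and $n+2$ at once. Since $P$ is completable, fix a completion $L$ of $P$, an $n\times n$ Latin square on $[n]$. Form an order-$(n+2)$ array $W$ by filling every cell below $D^2$ with the corresponding entry of $L$ shifted down by two rows (this extends the partial filling prescribed by (i), since $L$ agrees with $P$ on the nonempty cells of $P$), filling $D^2$ with $n+1,n+2$ as in (ii), and leaving all cells above $D^2$ empty. Because $L$ is Latin and the band uses only the two fresh symbols, $W$ is a partial Latin square containing $T^2(P)$; hence any completion of $W$ completes $T^2(P)$, and it suffices to fill the empty region on and above $D^2$. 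Note that a naive double application of Theorem~\ref{th:Smet} does not suffice here, since the second insertion would discard the symbol $n+1$ placed by the first; the two fresh symbols must instead be threaded in together along the band.

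First I would treat the even case, where $D^2$ is a block diagonal of $2\times 2$ cells $\{(i,i),(i,i+1),(i+1,i),(i+1,i+1)\}$ with $i$ odd. I fill each block as an intercalate on $n+1,n+2$, so that the two new symbols occupy complementary positions and behave like a single ``fat'' diagonal: within the two rows and two columns meeting a block the fresh symbols never clash. The region above $D^2$ is then filled by sweeping outward from the band, assigning to each empty cell a symbol still missing from both its row and its column; the shift by two in (i) is exactly the bookkeeping that makes the missing-symbol sets have the right sizes for this sweep to continue. Where a local choice must be resolved I would phrase it as a perfect matching in the balanced ``row versus missing symbol'' bipartite graph and appeal to Lemma~\ref{lem:Hall}, although, as in Smetaniuk's original completion \cite{Smetaniuk}, the global argument is an induction that maintains the Latin property as the empty region is consumed.

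The odd case runs along the same lines but needs care at the top-left, where $D^2$ degenerates to the irregular corner $\{(1,1),(2,1),(3,2),(3,3)\}$ together with single cells $(i,i)$ at even $i\ge 4$. Here I would complete the small top-left corner explicitly, placing $n+1,n+2$ on the prescribed cells and filling the handful of remaining corner cells by hand, after which the even-case sweep applies from row and column $4$ onward. I expect the main obstacle to be precisely this filling of the region above $D^2$: one must check that the placement never stalls, i.e.\ that the relevant availability graph meets the degree hypothesis of Lemma~\ref{lem:Hall} at every step. This is more delicate than in Theorem~\ref{th:Smet} because two fresh symbols are threaded through the array simultaneously and their interaction inside each $2\times 2$ block must be tracked throughout; the intercalate filling of the blocks is what keeps that interaction under control. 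Finally, an essentially bookkeeping-free alternative is to observe that this forward-augmented-diagonal construction is the column-reversal isotope of the back-augmented-diagonal construction of \cite{KuhlMcGinn}; as isotopies preserve completability, Theorem~\ref{th:T2} then follows directly from the result proved there, which is the route the reformulation in the surrounding text takes.
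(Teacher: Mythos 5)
Your closing sentence is, in fact, the paper's entire ``proof'' of this statement: Theorem~\ref{th:T2} is not proved in the paper but quoted from \cite{KuhlMcGinn}, where it is established for the augmented \emph{back} diagonal, and the surrounding text simply notes that an isotopy (reversing the column order) carries that result to the forward-diagonal formulation, since isotopies preserve completability. So if you rest the argument on that last observation, it is correct and coincides exactly with the paper's treatment.

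The direct construction that occupies the bulk of your proposal, however, has a genuine gap precisely where you suspect it does. The reduction to the Latin-square case is fine: fixing a completion $L$ of $P$ and forming $W$ (which is $T^2(L)$ with the same choice of band symbols, so $W \supseteq T^2(P)$) is standard, and it is also the correct first step. But completing the region on and above $D^2$ \emph{is} the theorem, and neither tool you invoke can carry it. A greedy sweep can stall, and Lemma~\ref{lem:Hall} does not apply: when you come to fill the cells of a late column above the band, a row that is already nearly full may have as few as one admissible symbol among the symbols still missing from that column, so the minimum-degree hypothesis $\delta(B)\ge \frac{|V_1|}{2}$ fails badly near the band. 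This failure is exactly why Smetaniuk's argument is nontrivial: his induction does not merely ``maintain the Latin property as the empty region is consumed'' but repairs unavoidable conflicts by swap chains passing through the cells holding the new symbol, and in the width-two setting the two symbols $n+1$ and $n+2$ must be threaded through simultaneously --- controlling their interaction (including the irregular corner in the odd case) is the actual technical content of the Kuhl--McGinn proof. Naming the obstacle (``one must check that the placement never stalls'') without supplying the mechanism that prevents stalling leaves the constructive route incomplete; as written it is a restatement of the goal, not a proof.
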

	
	The proof of this theorem in \cite{KuhlMcGinn} yields a Latin square
	which we shall refer to as the {\em Smetaniuk completion of $T^2(P)$}.
	Furthermore, when applying this theorem below, the augmented
	forward diagonal in the considered partial Latin squares will
	generally contain symbols $1$ and $2$; again, by isotopy, this of course
	makes no difference for the purpose of completability.

	\begin{observation} \cite{KuhlMcGinn}
	\label{obs1}
		Let $P$ be a Latin square of order $n$ and let $L$ be the Smetaniuk
		completion of $T^2(P)$ with augmented forward diagonal $D^2$.
		Then the following holds:
		
		\begin{itemize}
		
			\item[(i)] $L(i,j) = P(i-2,j)$ if cell $(i,j)$ is below $D^2$ of
			$L$.
			
			\item[(ii)] $L(i,j) \in \{n+1, n+2\}$ if $(i,j) \in D^2$.
			
			\item[(iii)] For odd $n$, if $\{P(1,2), P(1,3)\} \cap 
			\{P(2,4), P(2,5), P(3,4), P(3,5)\} = \emptyset$,
			then $L(3,4) = P(1,4)$ and $L(3,5) = P(1,5)$.
					
		\end{itemize}
			\end{observation}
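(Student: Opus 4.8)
The plan is to dispose of parts (i) and (ii) immediately and to concentrate the actual argument on part (iii). For (i) and (ii): by construction $L$ is a \emph{completion} of $T^2(P)$, hence $L$ agrees with $T^2(P)$ on every cell that $T^2(P)$ already fills. Since the defining conditions of $T^2(P)$ prescribe $T^2(P)(i,j)=P(i-2,j)$ for cells below $D^2$ and $T^2(P)(i,j)\in\{n+1,n+2\}$ for cells in $D^2$, parts (i) and (ii) follow at once, for \emph{any} completion of $T^2(P)$ and in particular for $L$.

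For part (iii) I would return to the explicit completion of $T^2(P)$ constructed in the proof of Theorem \ref{th:T2} and inspect its behaviour in the top-left corner, namely rows $1,2,3$. By part (i) we already know $L(3,1)=P(1,1)$, and by part (ii) the two diagonal cells $(3,2),(3,3)$ carry the new symbols $n+1$ and $n+2$. As $P$ is a Latin square, row $1$ of $P$ is a permutation of $[n]$; since row $3$ of $L$ is a permutation of $[n+2]$ whose entries in columns $1,2,3$ are $P(1,1),n+1,n+2$, the remaining cells $(3,4),\dots,(3,n+2)$ contain exactly the symbols $P(1,2),\dots,P(1,n)$. Thus the content of (iii) is that, among these, $P(1,4)$ and $P(1,5)$ occupy their \emph{natural} columns $4$ and $5$; that is, the downshift pattern $L(3,j)=P(1,j)$ persists past the diagonal for $j=4,5$, while only the two symbols $P(1,2),P(1,3)$ displaced by $n+1,n+2$ are rerouted elsewhere.

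I would establish this by tracking the intercalate swaps that the completion performs in the corner: placing $n+1,n+2$ on $(3,2),(3,3)$ forces $P(1,2)$ and $P(1,3)$ out of row $3$, and the completion reroutes them through a short chain of swaps that descends into rows $4,5$ (the downshifts of rows $2,3$ of $P$). The disjointness hypothesis $\{P(1,2),P(1,3)\}\cap\{P(2,4),P(2,5),P(3,4),P(3,5)\}=\emptyset$ says precisely that the rerouted symbols differ from the entries $P(2,4),P(2,5),P(3,4),P(3,5)$ occupying the neighbouring cells of rows $4,5$ at columns $4,5$; this is exactly the condition preventing an intercalate involving $(3,4)$ or $(3,5)$ from arising, so these two cells retain their natural values $P(1,4)$ and $P(1,5)$. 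The main obstacle is this last bookkeeping step: verifying, from the Kuhl--McGinn construction, that the hypothesis forces the correcting swaps to avoid columns $4$ and $5$ of row $3$. Once the relevant swaps near the corner are pinned down, checking $L(3,4)=P(1,4)$ and $L(3,5)=P(1,5)$ is a routine finite verification.
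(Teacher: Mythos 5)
Your handling of parts (i) and (ii) is correct and complete: the Smetaniuk completion is in particular a completion of $T^2(P)$, so it agrees with $T^2(P)$ on every filled cell, and clauses (i) and (ii) of the definition of $T^2(P)$ prescribe exactly the stated values on the cells below $D^2$ and in $D^2$. As you note, these two parts hold for \emph{any} completion of $T^2(P)$. For context, the paper itself contains no proof of this observation at all --- it is imported from Kuhl and McGinn \cite{KuhlMcGinn} as a black box --- so your attempt has to stand on its own; and on its own, part (iii) is where it fails.

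Part (iii) is the only substantive claim, precisely because it is the one part that does \emph{not} hold automatically for an arbitrary completion: your own counting shows only that $L(3,4)$ lies in a small candidate set (the symbols of column $4$ of $P$ not yet placed in column $4$ of $L$, intersected with the symbols still missing from row $3$ of $L$), and that set contains more than just $P(1,4)$. Singling out $P(1,4)$ therefore requires knowing which completion the proof of Theorem \ref{th:T2} actually produces. Here your argument is a plan rather than a proof: you posit that the construction resolves the corner by ``a short chain of intercalate swaps'' descending into rows $4$ and $5$, assert that the disjointness hypothesis is ``exactly the condition preventing an intercalate involving $(3,4)$ or $(3,5)$,'' and then explicitly defer the verification (``the main obstacle is this last bookkeeping step''). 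That deferred step is the entire content of (iii). Smetaniuk-type completions are built by an inductive extension process in which conflicting symbols are rerouted along bumping chains, and whether the symbols $P(1,2),P(1,3)$ displaced from row $3$ can invade cells $(3,4)$ or $(3,5)$ --- and why the stated hypothesis forbids it --- can only be settled by tracing that construction in \cite{KuhlMcGinn}; nothing in the proposal does so, and the claimed equivalence between the hypothesis and ``no intercalate arises'' is an unverified guess about the mechanics of a construction you have not examined. As written, part (iii) remains unproven.
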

			
	This observation implies the following.
		
		\begin{observation}
		\label{obs2}
			For odd $n$, 
			let $P$ be a Latin square of order $n$ and let $L$ be the Smetaniuk
		completion of $T^2(P)$ with augmented forward diagonal $D^2$.
			If $$\{P(1,2), P(1,3)\} \cap 
			\{P(2,4), P(2,5), P(3,4), P(3,5)\} = \emptyset,$$
			and the set $\{((2,4)_P, (2,5)_P, (3,4)_P, (3,5)_P\}$
			is an intercalate, then the set \newline
			$\{(1,4)_L, (1,5)_L, (2,4)_L, (2,5)_L\}$ is an intercalate
			on the same symbols.
		\end{observation}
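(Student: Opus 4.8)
The plan is to determine the four entries $L(1,4), L(1,5), L(2,4), L(2,5)$ explicitly by combining Observation~\ref{obs1} with a symbol count in columns $4$ and $5$ of $L$, and then to read off the intercalate from the Latin property alone. Write $a = P(2,4) = P(3,5)$ and $b = P(2,5) = P(3,4)$ for the two symbols of the given intercalate, so that $a \neq b$. First I would locate the four target cells relative to the augmented forward diagonal: since the only cell of $D^2$ in each of rows $1$ and $2$ lies in column $1$, all of $(1,4)_L, (1,5)_L, (2,4)_L, (2,5)_L$ lie above $D^2$, so they are not pinned down by Observation~\ref{obs1}(i)--(ii). On the other hand, the hypothesis is exactly the one required by Observation~\ref{obs1}(iii), which therefore applies and gives $L(3,4) = P(1,4)$ and $L(3,5) = P(1,5)$.

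Next I would count the symbols in column $4$ of $L$. By the shape of $D^2$, the cells of this column lying on $D^2$ are those in rows $4$ and $5$, and by Observation~\ref{obs1}(ii) they carry the two symbols $n+1$ and $n+2$. The remaining cells below $D^2$ are $(i,4)_L$ with $i \geq 6$, and by Observation~\ref{obs1}(i) each equals $P(i-2,4)$; together with the recovered value $L(3,4) = P(1,4)$, these entries range over $P(k,4)$ for $k \in \{1,\dots,n\} \setminus \{2,3\}$. Since column $4$ of the Latin square $P$ contains every symbol once, this set equals $\{1,\dots,n\} \setminus \{P(2,4), P(3,4)\} = \{1,\dots,n\} \setminus \{a,b\}$. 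Hence the only symbols of $\{1,\dots,n\}$ still missing from column $4$ are $a$ and $b$, and they must occupy the two remaining cells, giving $\{L(1,4), L(2,4)\} = \{a,b\}$. The identical argument in column $5$, whose cut-off entries are $P(2,5) = b$ and $P(3,5) = a$, yields $\{L(1,5), L(2,5)\} = \{a,b\}$.

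Finally I would invoke that $L$ is a Latin square to fix the arrangement. No row repeats a symbol, so $L(1,4) \neq L(1,5)$ and $L(2,4) \neq L(2,5)$; no column repeats a symbol, so $L(1,4) \neq L(2,4)$. As all four entries lie in $\{a,b\}$, these inequalities force $L(1,4) = L(2,5)$ and $L(1,5) = L(2,4)$. Thus $\{(1,4)_L, (1,5)_L, (2,4)_L, (2,5)_L\}$ is an intercalate, and it is on $\{a,b\}$, the same pair of symbols as the intercalate $\{(2,4)_P,(2,5)_P,(3,4)_P,(3,5)_P\}$.

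I expect the main obstacle to be the second step: one must verify that, outside rows $1$ and $2$, column $4$ (respectively column $5$) of $L$ is missing exactly the two intercalate symbols. This requires the precise incidence of the cells below, on, and above $D^2$ in these columns, so that the entries cut off by the diagonal are exactly $P(2,4), P(3,4)$ (respectively $P(2,5), P(3,5)$); it is here that Observation~\ref{obs1}(iii) is indispensable, since without the recovered value $L(3,4) = P(1,4)$ the symbol $P(1,4)$ would also be absent from the lower part of the column and the pair in rows $1,2$ could not be isolated as $\{a,b\}$. Once the two symbol sets are known, the intercalate orientation follows immediately from the Latin constraints, so no further case analysis is needed.
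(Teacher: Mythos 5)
Your proof is correct and is essentially the argument the paper intends: the paper states Observation \ref{obs2} without proof, as a direct consequence of Observation \ref{obs1}, and your derivation---entries below $D^2$ from part (i), the two diagonal cells of columns $4$ and $5$ from part (ii), row $3$ from part (iii), and then the Latin property of $L$ forcing the cells in rows $1$ and $2$ of each column to carry the two missing symbols $\{a,b\}$ in the unique intercalate arrangement---is exactly how that implication is meant to be verified. One minor inaccuracy that does not affect the argument: under the augmented-diagonal structure the paper actually works with for odd order (a six-cell corner in rows and columns $1$--$3$ together with $2\times 2$ blocks at $(4,4),(6,6),\dots$, as is visible in the proof of Theorem \ref{th:PLS}), rows $1$ and $2$ each contain \emph{two} cells of $D^2$, located in columns $1$--$3$, rather than a single cell in column $1$; what your proof really uses is only that $(1,4)_L,(1,5)_L,(2,4)_L,(2,5)_L$ lie above $D^2$ and that columns $4$ and $5$ meet $D^2$ precisely in rows $4$ and $5$, both of which hold.
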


	\subsection{Reducing partial Latin squares}

	Kuhl and McGinn \cite{KuhlMcGinn} decribed a method for ``reducing''
	elements of $\mathrm{PLS}(a,b;n)$.  We sketch their
	method below; for a more elaborate exposition, see \cite{KuhlMcGinn}.
	
	Let $a,b,j,k \in [n]$, let $P \in \mathrm{PLS}(a,b;n)$ and denote
	by $C_j$ and $R_k$ column $j$ and row $k$, respectively, as subarrays of $P$.
	As for partial Latin squares, 
	we shall often treat these subarrays as sets of ordered triples,
	i.e.
	$C_j = \{(i,j,s) : (i,j,s) \in P, i \in [n], s \in [n]\}$, and similarly for rows.
	
	Henceforth, for $P \in \mathrm{PLS}(a,b;n)$, we shall assume that all nonempty
	cells of $P$ are in the first $a$ rows and  first $b$ columns of $P$.
	For any two columns $C_j$ and $C_k$ in $P$, 
	we define the {\em column composition} $C_j \circ_l C_k$, where $l \leq a$,
	as a new column with the same elements as $C_j$ except that the symbol in
	row $l$ of  $C_j \circ_l C_k$ is $P(l,k)$. A {\em row composition}
	is defined as a column composition in the row-column conjugate $P^{(rc)}$
	of $P$.
	
	Now, let $P \in \mathrm{PLS}(2,3;n)$ and
	assume $\alpha$ is a symbol
	not occurring in the $2 \times 3$ subarray  in the upper left corner of $P$;
	assume further that
	$$P(j,1) = P(k,2) = P(l,3) = P(1,q)= P(2,r) = \alpha.$$
	If there is an $i \in [n] \setminus [2]$, such that $R_j \circ_1 R_i$,
	$R_k \circ_2 R_i$, $R_l \circ_3 R_i$ are Latin (i.e. contains no repeated
	symbols), then we say that 
	$\alpha$ is a {\em row-replacable symbol} and that row $R_i$ {\em replaces}
	$\alpha$. If $i \in \{j,k,l\}$, then $R_i$ {\em replaces itself}.
	Similarly, if there is a $p \in [n]\setminus [3]$ such that
	$C_q \circ_1 C_p$ and $C_r \circ_2 C_p$ are Latin, then $\alpha$ is a
	{\em column-replacable symbol}, and $C_p$ {\em replaces} $\alpha$.
	If $p \in \{q,r\}$, then $C_p$ {\em replaces itself}.
	
	If $\alpha$ is both row- and column-replacable, then we say that
	$\alpha$ is {\em replacable}. If $\alpha$ is replacable with $R_i$
	and $C_p$ replacing $\alpha$ as above, then we define the {\em reduction}
	of $A$, denoted $R(P; R_i, C_p, \alpha)$ as the array obtained
	by removing rows $R_j, R_k, R_l$ and columns $C_q, C_r$ from $P$,
	and adding the rows $R_j \circ_1 R_i$, $R_k \circ_2 R_i$, $R_l \circ_3 R_i$,
	and columns $C_q \circ_1 C_p$, $C_r \circ_2 C_p$, and then finally
	removing $C_p$ and $R_i$ from $P$.
	Note that, for the purpose of completability, we may by isotopy
	assume that $R(P;R_i, C_p, \alpha)$
	is a partial Latin square; that is, the removed symbol is $n$
	and the last column and row are removed when forming
	$R(P;R_i, C_p, \alpha)$.
	
	The following was proved in \cite{KuhlMcGinn}.
	
	\begin{lemma}
	\label{lem:reduce}
	\cite{KuhlMcGinn}
		Let $P \in \mathrm{PLS}(2,3;n)$ where $n \geq 9$.
		If $\alpha$ is a symbol that does not occur in the
		intersection of the filled rows and columns of $P$,
		then there is a row replacing $\alpha$.	
	\end{lemma}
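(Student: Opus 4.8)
The plan is to prove the lemma by a direct counting (pigeonhole) argument over the candidate rows $R_3,\dots,R_n$. First I would locate $\alpha$. Since columns $1,2,3$ are completely filled, each is a permutation of $[n]$, so $\alpha$ occurs exactly once in each; write $P(j,1)=P(k,2)=P(l,3)=\alpha$. Because $\alpha$ does not occur in the $2\times 3$ upper-left corner, the rows $j,k,l$ all lie in $\{3,\dots,n\}$, and they are pairwise distinct: if, say, $j=k$, then row $j$ of $P$ would contain $\alpha$ in both columns $1$ and $2$, contradicting that $P$ is a partial Latin square.

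Next I would unwind the definition of row-replacability into explicit inequalities. Recalling that a row composition is a column composition in the conjugate $P^{(rc)}$, the row $R_j\circ_1 R_i$ is simply $R_j$ with its column-$1$ entry $\alpha$ replaced by $P(i,1)$. Since each row indexed by $j\geq 3$ meets only columns $1,2,3$, this composed row is Latin exactly when $P(i,1)\notin\{P(j,2),P(j,3)\}$. In the same way, $R_k\circ_2 R_i$ is Latin iff $P(i,2)\notin\{P(k,1),P(k,3)\}$, and $R_l\circ_3 R_i$ is Latin iff $P(i,3)\notin\{P(l,1),P(l,2)\}$. Thus a row $R_i$ with $i\in\{3,\dots,n\}$ replaces $\alpha$ precisely when $i$ satisfies these three pairs of non-equalities.

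Then I would bound the number of rows that fail some condition. Because column $1$ contains every symbol exactly once, at most one index $i$ satisfies $P(i,1)=P(j,2)$ and at most one satisfies $P(i,1)=P(j,3)$; hence the first condition excludes at most two rows from $\{3,\dots,n\}$. The analogous statement for columns $2$ and $3$ shows the second and third conditions each exclude at most two rows, so at most $6$ rows of $\{3,\dots,n\}$ are excluded in total. Since $n\geq 9$ gives $|\{3,\dots,n\}|=n-2\geq 7>6$, at least one admissible row survives, and it replaces $\alpha$, proving the lemma.

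I do not anticipate a genuine obstacle: the argument is essentially bookkeeping. The only real care is in correctly reading off the Latin conditions from the conjugate definition of row composition, and in confirming that the forbidden rows are counted \emph{inside} the admissible range $\{3,\dots,n\}$ rather than possibly falling on rows $1$ or $2$ (which only helps the count). It is worth noting that the hypothesis $n\geq 9$ is exactly the threshold making $n-2>6$, so the bound is tight for this method.
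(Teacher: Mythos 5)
Your proof is correct. Note, however, that this paper contains no proof of Lemma \ref{lem:reduce} to compare against: the lemma is quoted directly from Kuhl and McGinn \cite{KuhlMcGinn}. Your unwinding of the row-composition definition is accurate (for $i,j\geq 3$ the composed row $R_j\circ_1 R_i$ has entries $P(i,1),P(j,2),P(j,3)$ only, so Latinness reduces to the two non-equalities you state), and the pigeonhole count --- six bad events, each killing at most one candidate row, against $n-2\geq 7$ candidates --- is a complete, self-contained argument of exactly the kind used in the cited source, including the observation that $n\geq 9$ is precisely the threshold this counting needs.
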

		
	A partial Latin square $P \in \mathrm{PLS}(2,3;n)$
	is {\em reducible} if there is a symbol $\alpha$,
	a row $R_i$ and a column $C_j$, such that row $R_i$ replaces
	$\alpha$ and column $C_j$ replaces $\alpha$ and itself;
	we say that the reduction $R(P; R_i, C_j, \alpha)$ is a {\em proper reduction}
	of $P$. For a sequence of partial Latin squares $A_1, A_2, \dots, A_m$,
	where $A_{i+1}$ is a proper reduction of $A_i$, $i=1,\dots,m-1$,
	we say that $A_m$ is obtained by {\em successive reduction}
	of $A_1$ and that $A_1$ can be {\em successively reduced} to $A_m$.
	
	The following is a main result of the method in \cite{KuhlMcGinn};
	here formulated for partial Latin squares in $\mathrm{PLS}(2,3;n)$.
	%thm 5.4 i Kuhl-McGinn
	
	\begin{theorem}
	\label{th:propreduction}
	\cite{KuhlMcGinn}
		If $P \in \mathrm{PLS}(2,3;n)$ is reducible
		and one of its proper reductions is completable, then $P$
		is completable.
	\end{theorem}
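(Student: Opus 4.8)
The plan is to reverse the reduction: starting from a completion of the proper reduction $P' = R(P; R_i, C_p, \alpha)$, I would rebuild a completion of $P$. By isotopy I may assume that $\alpha = n$, that $C_p$ and $R_i$ are the column and row deleted last, and that $P'$ is a genuine partial Latin square of order $n-1$; here the replaceability hypotheses — that $R_j \circ_1 R_i$, $R_k \circ_2 R_i$, $R_l \circ_3 R_i$ and $C_q \circ_1 C_p$, $C_r \circ_2 C_p$ are all Latin — are exactly what guarantee that $P'$ is well defined as a partial Latin square. Fix a completion $L'$ of $P'$, an $(n-1) \times (n-1)$ Latin square on $[n-1]$, and retain the original indices, so that $L'$ occupies the rows $[n] \setminus \{i\}$ and columns $[n] \setminus \{p\}$. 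The target is an $n \times n$ Latin square $L$ on $[n]$ with $L(r,c) = P(r,c)$ at every filled cell of $P$; the extra row $i$, extra column $p$, and extra symbol $n$ are precisely what the reduction stripped away.

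Next I would undo the compositions cell by cell. In $P$ the symbol $\alpha = n$ occurs exactly in $(j,1), (k,2), (l,3)$ and $(1,q), (2,r)$, and in passing to $P'$ each of these occurrences was either deleted together with $C_p$ or $R_i$, or overwritten by a composition entry drawn from $R_i$ or $C_p$. In $L$ I reinstate $n$ at these cells and route every overwritten symbol outward, sending the symbol displaced from a cell $(r,c)$ to the cell $(i,c)$ of the reinserted row and to the cell $(r,p)$ of the reinserted column. The first dividend is agreement with $P$: the entries $P(i,1), P(i,2), P(i,3)$ displaced from $(j,1), (k,2), (l,3)$ reappear in row $i$ at columns $1,2,3$ exactly as $P$ prescribes, the entry displaced along $C_p$ resurfaces at the prescribed cell of the reinserted column, and one checks that the filled rows $1,2$ and filled columns $1,2,3$ of $P$ are reproduced verbatim because no further $n$ is ever placed in them.

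The heart of the matter is to place the symbol $n$ in the remaining (\emph{generic}) rows and columns and to fill the still-empty cells of the reinserted row and column. Because $L'$ is a full square on $[n-1]$, every generic column already contains all of $[n-1]$, so merely appending row $i$ would force $n$ into every generic cell of that row, which is impossible; I am therefore forced into a prolongation. In each generic row and column I must select one cell of $L'$ to overwrite with $n$, pushing its former symbol out to the reinserted row and column, and these chosen cells must carry pairwise distinct symbols so that the reinserted lines come out Latin. In other words, I need a transversal of the generic subarray of $L'$ — one cell per generic row, in distinct columns and with distinct symbols — extending the cells already fixed above. This is the main obstacle: since such a transversal must be simultaneously column-distinct and symbol-distinct, it cannot be produced by a bare degree bound, and transversals of Latin squares are in general delicate. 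I would exploit the slack afforded by the large generic block and by having two independent lines to absorb the displaced symbols: set up a bipartite matching of the flavour of Lemma~\ref{lem:Hall} to assign symbols to the generic rows, using that only boundedly many symbols are forbidden at each row by the already-committed entries, and then repair any residual column coincidences by augmenting along intercalate-type chains. Once the transversal and the two reinserted lines are in place, a direct verification that each symbol of $[n]$ occurs exactly once in every row and column, together with the agreement with $P$ established above, shows that $L$ completes $P$; invariance of completability under isotopy then yields the theorem.
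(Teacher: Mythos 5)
First, a point of reference: the paper does not prove Theorem~\ref{th:propreduction} at all --- it is imported from Kuhl and McGinn, and the proof there runs through Smetaniuk's machinery (the operators behind Theorems~\ref{th:Smet} and~\ref{th:T2}), not through a direct prolongation of a fixed completion. Your proposal therefore attempts a genuinely different route, but it has a fatal gap at exactly the step you call ``the heart of the matter.'' Having fixed an \emph{arbitrary} completion $L'$ of the reduction $P'$, your construction requires a transversal-type object in $L'$: one cell in each generic row and each generic column, with pairwise distinct symbols, extending the five forced cells $(j,1),(k,2),(l,3),(1,q),(2,r)$. This is a simultaneous row--column--symbol (three-dimensional) matching condition, and no argument in the spirit of Lemma~\ref{lem:Hall} can produce it: a bipartite matching controls two of the three coordinates, and once you match generic rows to symbols, the columns are determined by $L'$ and may collide. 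Your fallback --- ``repair any residual column coincidences by augmenting along intercalate-type chains'' --- is not an argument; there is no guarantee such augmenting chains exist or that the process terminates. Indeed, the existence of transversals in Latin squares is a genuinely hard problem (the Cayley table of $\mathbb{Z}_{2m}$ has no transversal at all), so a scheme that must work for an \emph{arbitrary} completion $L'$ cannot succeed: the freedom needed to make the extension work lives in the choice of the completion itself, and you have already spent it. There is also a secondary inconsistency: in your framework the reinserted row $i$ consists precisely of the displaced symbols, so the symbols at the forced cells (e.g.\ $P(i,1),P(i,2),P(i,3)$ together with the displaced column entries) must be pairwise distinct, and nothing in the definition of replacability guarantees this.

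This is precisely the obstacle the actual proof is designed to bypass. Kuhl and McGinn do not complete $P'$ and then try to extend; they pass to a conjugate in which the deleted symbol $\alpha$ becomes a deleted line, and invoke the Smetaniuk completion, which builds the order-$n$ Latin square from scratch while preserving only the entries below the (augmented) forward diagonal --- thereby never needing a transversal of a prescribed square. The compositions $R_j\circ_1 R_i$, $R_k\circ_2 R_i$, $R_l\circ_3 R_i$, $C_q\circ_1 C_p$, $C_r\circ_2 C_p$ in the definition of the reduction are engineered exactly so that the cells preserved by that construction are, up to isotopy, the cells of $P$. Note that this is also how the present paper itself uses the framework (in the proof of Theorem~\ref{th:PLS}): completions of reduced squares are lifted via Theorem~\ref{th:T2} and intercalate swaps, never by prolongation along a transversal. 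Your accounting of where $\alpha=n$ and the displaced entries of $P$ must reappear is correct, but without the Smetaniuk step the construction cannot be carried out.
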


	\section{Completing partial latin squares in $\mathrm{PLS}(2,3;n)$}
	
	In this section we describe our method for completing
	partial Latin squares in $\mathrm{PLS}(2,3;n)$. Throughout the rest of the paper,
	we assume
	that every partial Latin square from this family has
	all nonempty cells in the first two rows and three first columns.
	
	\subsection{Reducibility}
	
	If $P$ is an $n \times n$ partial latin square where rows $r_1$ and $r_2$
	are completely filled, then the {\em $(r_1,r_2)$-row-permutation}
	of $P$ is the permutation $\sigma : [n] \to [n]$ defined by
	$\sigma(P(r_1,i)) = P(r_2,i)$ for every $i \in [n]$.

	Consider 
	the disjoint cycle representation of
	a row-permutation $\sigma$ of $P \in \mathrm{PLS}(2,b;n)$.
	A {\em cycle type} of a cycle $C$ of length $m$ 
	in this representation
	of $\sigma$ is a sequence $s$ of $m$ integers, where the $i$th element
	of $s$ is $1$ if the $i$th element of $C$ appears in the upper left
	$1 \times b$ subarray of $P$; and $0$ otherwise.
	Two cycle types are {\em equivalent} if one of them can be obtained
	from the other by permuting the elements in the sequence cyclically.
	If a cycle in the disjoint cycle representation of $\sigma$ has a cycle
	type that is equivalent to $s$, then $s$ {\em occurs in $\sigma$}.
	
	For all non-equivalent cycle types $s$ that occurs in the permutation
	$\sigma$, let $i_s$ be the number of cycles of (the disjoint cycle
	representation of) $\sigma$
	that have a cycle type
	that is equivalent to $s$. The set of all ordered pairs $(s,i_s)$,
	where $s$ is a cycle type that occurs in $\sigma$, is called
	the {\em cycle type of $\sigma$}, or the {\em cycle type of $P$}
	if $P \in \text{PLS}(2,3;n)$ and $\sigma$ is the $(1,2)$-row-permutation of $P$.
	
	Two cycle types $A_1$ and $A_2$ of row permutations are
	{\em equivalent} if they correspond to two different disjoint cycle
	representations of the same permutation. Note that $A_1$
	and $A_2$ are equivalent if and only if there is a bijection 
	$\varphi : A_1 \to A_2$ such that $\varphi((s,i_s)) = (t, i_t)$
	if and only if $s$ and $t$ are equivalent and $i_s = i_t$. 
	
	Two elements in a sequence $s$ are called {\em adjacent} if one
	is immediately followed by the other.

	\begin{definition}
	Let $P \in \mathrm{PLS}(2,3;n)$ be a partial Latin square, where
	$n \geq 8$. $P$ is {\em completely reduced} if the cycle type
	of every cycle in the
	disjoint cycle representation of the $(1,2)$-row-permutation of $P$
	is equivalent to one of the following sequences:

	\begin{itemize}
	
		\item[(i)] 00,
		
		\item[(ii)] 01,
		
		\item[(iii)] 11,
		
		\item[(iv)] 101,
		
		\item[(v)] 111,
		
		\item[(vi)] 1010,
		
		\item[(vii)] 1110,
		
		\item[(viii)] 10101,
		
		\item[(ix)] 101010,
	
	\end{itemize}
	\end{definition}
	
	If $P'$ is a completely reduced partial Latin square that is obtained
	from successive reduction of $P$, then $P'$ is called a 
	{\em complete reduction} of $P$.
	
	We shall use the following simple observation.
	
	\begin{lemma}
		Let $P \in \mathrm{PLS}(2,3;n)$, where $n \geq 8$.
		If $P$ is not completely reduced, then the cycle type of at least
		one of the cycles of length at least $3$ 
		in the $(1,2)$-row-permutation of $P$ contains
		two adjacent zeros.	
	\end{lemma}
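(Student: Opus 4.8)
The plan is to prove the statement by isolating an offending cycle and analysing it, using two structural facts about the $(1,2)$-row-permutation $\sigma$ that do almost all of the work. First, $\sigma$ has no fixed points: a fixed point would mean $P(1,i)=P(2,i)$ for some column $i$, which is impossible since column $i$ of a partial Latin square contains no repeated symbol. Hence every cycle of $\sigma$ has length at least $2$. Second, the symbols receiving the label $1$ in a cycle type are exactly the entries of the upper-left $1 \times 3$ subarray, namely $P(1,1),P(1,2),P(1,3)$; these are three distinct symbols, so across the whole disjoint cycle representation there are exactly three $1$'s, and in particular any single cycle contains \emph{at most three} $1$'s.

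First I would dispose of the short cycles. Since $P$ is not completely reduced, some cycle $C$ in the representation of $\sigma$ has a cycle type equivalent to none of (i)--(ix). A cycle of length $1$ cannot occur by the first fact, and any cycle of length $2$ has type $00$, $01$ or $11$, i.e.\ one of (i)--(iii); hence $C$ must have length $m \geq 3$. It therefore suffices to show that a cycle of length $m \geq 3$ whose cyclic $0/1$-sequence avoids two adjacent zeros must be equivalent to one of (iv)--(ix): the offending $C$ will then necessarily contain two adjacent zeros, which is exactly the desired conclusion, and $C$ has length at least $3$ as required.

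The core is a counting step followed by a short enumeration. Let $C$ have length $m \geq 3$ and suppose its cyclic sequence has no two (cyclically) adjacent zeros, and write $k$ for its number of $1$'s. If $k=0$ the sequence is all zeros, which for $m\geq 3$ has adjacent zeros, a contradiction; so $k \geq 1$. The $k$ ones then split the cyclic sequence into $k$ gaps, each a maximal run of zeros between consecutive ones, and each such gap holds at most one zero. Hence the number of zeros is at most $k$, giving $m \leq 2k$, so that $k \geq \lceil m/2 \rceil$; combined with $k \leq 3$ this forces $m \leq 6$. Running through $m \in \{3,4,5,6\}$ together with the admissible $k$ (namely $\lceil m/2\rceil \leq k \leq 3$), and distributing the $m-k$ zeros one-per-gap, one checks that in each case there is, up to cyclic rotation, a unique admissible sequence: $101$ and $111$ for $m=3$; $1010$ and $1110$ for $m=4$; $10101$ for $m=5$; and $101010$ for $m=6$. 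These are precisely (iv)--(ix).

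I expect the only delicate point to be the enumeration, where one must take the inequality ``no two adjacent zeros $\Rightarrow$ number of zeros $\leq$ number of ones'' \emph{cyclically} (so that the first and last entries are also compared), and classify the resulting sequences up to \emph{cyclic} equivalence rather than as linear strings. Everything else is immediate from the two structural facts. As a sanity check, the enumeration in fact shows that (iv)--(ix) is exactly the set of cycle types of length at least $3$ having at most three $1$'s and no two adjacent zeros, which presumably explains why these particular sequences occur in the definition of \emph{completely reduced}.
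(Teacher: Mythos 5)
Your proof is correct. Note that the paper itself offers no proof of this lemma --- it is stated as a ``simple observation'' --- and your argument supplies exactly the intended justification: no fixed points (since $P(1,i) \neq P(2,i)$ for every column $i$), at most three $1$'s in any cycle (since only the three distinct symbols $P(1,1), P(1,2), P(1,3)$ receive label $1$), the cyclic counting bound $m \leq 2k \leq 6$, and the enumeration showing that (iv)--(ix) are precisely the cyclic $0/1$-sequences of length at least $3$ with at most three ones and no two cyclically adjacent zeros, with adjacency correctly interpreted cyclically so that it matches how the lemma is later used in the proof of Theorem 3.3.
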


	The following theorem is now easy to prove.
	
	\begin{theorem}
	\label{th:reduction}
		A partial Latin square $P \in \mathrm{PLS}(2,3;n)$, where $n \geq 9$,
		has a proper reduction if and only if it is not completely reduced.	
	\end{theorem}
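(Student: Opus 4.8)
The plan is to reinterpret replacability entirely in terms of the $(1,2)$-row-permutation $\sigma$, and then read off both directions from the list of admissible cycle types together with the two preceding lemmas. First I would record two structural facts. Since $P(1,c)\neq P(2,c)$ in every column $c$, the permutation $\sigma$ is fixed-point free, so every cycle has length at least $2$; in particular the length-$2$ cycles are automatically of type 00, 01 or 11, and the distinction between reduced and non-reduced is carried entirely by the cycles of length at least $3$. Second, for a symbol $\alpha$ lying outside the $2\times 3$ corner with $P(1,q)=P(2,r)=\alpha$, one has $P(2,q)=\sigma(\alpha)$ and $P(1,r)=\sigma^{-1}(\alpha)$; moreover $\alpha$ lies outside the corner precisely when $q>3$ and $r>3$, i.e. precisely when both $\alpha$ and its predecessor $\sigma^{-1}(\alpha)$ receive the label $0$. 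Thus ``$\alpha$ is not in the corner'' is exactly the statement that $\alpha$ and $\sigma^{-1}(\alpha)$ form two adjacent zeros in the cycle type of $\alpha$'s cycle.

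The crux is the column condition, which I would handle next. For $\alpha$ outside the corner we have $q\neq r$ (otherwise column $q$ would repeat $\alpha$), and the only candidate columns that can replace $\alpha$ and itself are $C_q$ and $C_r$, since ``replaces itself'' forces $p\in\{q,r\}$. Because columns $q$ and $r$ are filled only in rows $1$ and $2$, the compositions $C_q\circ_1 C_p$ and $C_r\circ_2 C_p$ reduce, in either case $p=q$ or $p=r$, to the single requirement $P(1,r)\neq P(2,q)$, that is $\sigma^{-1}(\alpha)\neq\sigma(\alpha)$, equivalently $\sigma^2(\alpha)\neq\alpha$. Hence a column replacing $\alpha$ and itself exists if and only if $\alpha$ lies in a cycle of length at least $3$. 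I expect this bookkeeping, tracking exactly which entries survive the column compositions when columns $q,r$ have support only in the first two rows, to be the main (though routine) obstacle; the symmetric row condition needs no work, since Lemma \ref{lem:reduce} already guarantees a row replacing any symbol outside the corner whenever $n\geq 9$.

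Combining these facts gives the clean equivalence: $P$ admits a proper reduction if and only if some symbol $\alpha$ is simultaneously outside the corner and contained in a cycle of length at least $3$; by the translation above this happens if and only if some cycle of length at least $3$ contains two adjacent zeros (read cyclically). From here both directions follow immediately. If $P$ is not completely reduced, the lemma preceding this theorem produces a cycle of length at least $3$ whose type contains two adjacent zeros, hence a proper reduction. Conversely, if $P$ is completely reduced, every cycle of length at least $3$ has a type among 101, 111, 1010, 1110, 10101, 101010, and a direct inspection shows that none of these contains two cyclically adjacent zeros; therefore no symbol outside the corner lies in a cycle of length at least $3$, and no proper reduction can exist. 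The length-$2$ cycles of type 00 cause no trouble here, since in that case $\sigma^2(\alpha)=\alpha$ and no column can replace itself.
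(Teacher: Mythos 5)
Your proof is correct and follows essentially the same route as the paper: both directions rest on the preceding lemma together with Lemma \ref{lem:reduce} for the row condition, plus the observation that the two columns containing a symbol $\alpha$ outside the corner replace $\alpha$ and themselves precisely when $\alpha$ lies in a cycle of length at least $3$ of the $(1,2)$-row-permutation (equivalently, when its cycle has two adjacent zeros). The only difference is that you spell out the column-composition computation and the inspection of the admissible cycle types, which the paper leaves implicit.
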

	\begin{proof}
		If $P$ is not completely reduced, then by the preceding lemma,
		there is a cycle $C$ of length at least $3$
		in the $(1,2)$-row-permutation $\sigma$ of $P$
		whose cycle type contains two
		adjacent zeros. This means that $C$
		contains a symbol $s$ that is neither
		contained in the $2 \times 3$ subarray in the upper left corner of $P$,
		nor in an intercalate contained in the first two rows of $P$.
		Hence, the two columns containing $s$ each replace $s$ and themselves,
		respectively. Moreover, by Lemma \ref{lem:reduce}, there is a row
		replacing $s$. Hence, $P$ has a proper reduction.
		
		Conversely, if there is a proper reduction of $P$, then, since
		there is a column replacing itself, there is a
		cycle of length at least $3$ in $\sigma$ that has a cycle type with 
		adjacent zeros. Thus $P$ is not completely reduced.
	\end{proof}
	
	It follows from this theorem that from any partial Latin square
	in $\mathrm{PLS}(2,3;n)$, $n \geq 9$, we can by succesive reduction
	obtain a partial Latin square in $\mathrm{PLS}(2,3;8)$,
	or a partial Latin square in $\mathrm{PLS}(2,3;m)$ with a cycle type that is
	equivalent to one of the following cycle types:
	
	\begin{itemize}
	
		\item[(a)] $\{(10,3), (00,k)\}$,
		
		\item[(b)] $\{(10,1), (11,1), (00,k+1)\}$,
	
		\item[(c)] $\{(10,1), (101,1), (00,k+1)\}$,
		
		\item[(d)] $\{(10,1), (1010,1), (00,k)\}$,
		
		\item[(e)] $\{(111,1), (00,k+2)\}$,
	
		\item[(f)] $\{(1110,1), (00,k+1)\}$,
		
		\item[(g)] $\{(10101,1), (00,k+1)\}$,
		
		\item[(h)] $\{(101010,1), (00,k)\}$,
		
	\end{itemize}
	where $k \geq 1$.
	
	Thus, for proving Conjecture \ref{conj:general} it suffices to show
	that all partial Latin squares in $\mathrm{PLS}(2,3;8)$ as well
	as all of type (a)-(h) can be completed.
	In the next section we shall verify the former statement and
	also prove that all partial Latin squares of the type (e) have
	completions.
	
	\subsection{Completing a particular family in $\mathrm{PLS}(2,3;n)$}

	In this section we prove that all partial Latin squares in 
	$\mathrm{PLS}(2,3;n)$ with a specific cycle type are completable.

	\begin{theorem}
	\label{th:PLS}
		If $P \in \mathrm{PLS}(2,3;n)$ is a partial Latin square
		with cycle type $\{(111,1), (00, k+2)\}$, $k \geq 3$,
		then $P$ is completable.
	\end{theorem}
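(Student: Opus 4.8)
My plan is to exhibit a completion of $P$ as the Smetaniuk completion of $T^2(B)$ for a carefully chosen Latin square $B$ of order $n-2$, using Observations~\ref{obs1} and~\ref{obs2} to transport both the prescribed columns and the required intercalates into their correct positions in the completion.

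First I would normalize $P$ by isotopy. Since $n = 3 + 2(k+2) = 2k+7$ is odd and the unique $111$-cycle sits on the three corner symbols, the upper-left $2 \times 3$ subarray is, up to isotopy, the order-$3$ Latin rectangle $\left(\begin{smallmatrix}1&2&3\\2&3&1\end{smallmatrix}\right)$. Moreover, each of the $k+2$ transposition ($00$-)cycles is precisely an intercalate $\left(\begin{smallmatrix}x&y\\y&x\end{smallmatrix}\right)$ occupying two of the columns $4,\dots,n$ in rows $1,2$, on a pair of symbols $x,y\notin\{1,2,3\}$. By permuting columns I would place these intercalates in the consecutive pairs $(4,5),(6,7),\dots,(n-1,n)$. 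The entries of the first three columns in rows $3,\dots,n$ are retained as the given data.

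Next I would construct the base $B$, an order-$(n-2)$ Latin square designed so that $T^2$ converts it into a completion of $P$. Concretely, $B$ is prescribed on its first three columns by the shift of Observation~\ref{obs1}(i), so that $B(i,c)$ reproduces the lower part of the prescribed columns of $P$; its corner is set so that the augmented-diagonal symbols of Observation~\ref{obs1}(ii) recover the order-$3$ Latin rectangle of $P$; and on rows $2,3$ it carries, in the appropriate column pairs, the intercalates that the mechanism underlying Observation~\ref{obs2} lifts into rows $1,2$ of the completion on the correct transposition symbols. The remaining cells of $B$ I would fill to a full Latin square: the order-$3$ Latin-square corner makes Theorem~\ref{th:Haggkvist} applicable for the structured part, and Theorem~\ref{th:MHall} together with Lemma~\ref{lem:PLS} disposes of the remaining columns. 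Applying Theorem~\ref{th:T2} to $B$ then produces the Smetaniuk completion $L$ of $T^2(B)$, and Observation~\ref{obs1}(i)--(iii) together with Observation~\ref{obs2} identify the first two rows and three columns of $L$ with $P$; hence $L$ completes $P$.

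The hard part will be arranging the base so that the non-overlap hypothesis $\{P(1,2),P(1,3)\}\cap\{B(2,j),B(2,j{+}1),B(3,j),B(3,j{+}1)\}=\emptyset$ of Observations~\ref{obs1}(iii) and~\ref{obs2} holds \emph{simultaneously} for every seeded intercalate, while at the same time forcing the first three columns of $B$ to equal the prescribed columns of $P$ under the shift. These two demands compete: pinning down the columns of $B$ restricts which symbols may appear in rows $2,3$, and hence which intercalates can be seeded disjointly from the corner symbols $1,2,3$, and the counting of available column pairs against the $k+2$ required intercalates is tight by one and must be reconciled using the corner region. I expect the assumption $k\geq 3$ to be exactly what supplies enough free columns and symbols to place all intercalates disjointly from the corner and to satisfy the non-overlap condition for each of them at once; making this simultaneous placement explicit, and checking the base case $k=3$ (order $13$) directly, is where the real work lies.
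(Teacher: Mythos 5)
There is a genuine gap, and it lies exactly where you locate ``the real work'': the tools you invoke cannot deliver the simultaneous control you need. In the Smetaniuk completion $L$ of $T^2(B)$, the only controlled cells are those below the augmented forward diagonal (Observation~\ref{obs1}(i)), the diagonal itself (Observation~\ref{obs1}(ii)), and --- for odd order --- the two cells $(3,4),(3,5)$ together with the single intercalate on positions $(1,4),(1,5),(2,4),(2,5)$ (Observation~\ref{obs1}(iii) and Observation~\ref{obs2}). Rows $1$ and $2$ of $L$ lie almost entirely \emph{above} the diagonal, where the completion is produced by an uncontrolled filling; there is no analogue of Observation~\ref{obs2} for the column pairs $(6,7),(8,9),\dots$, so you cannot seed $k+2$ intercalates in rows $2,3$ of $B$ and expect all of them to lift into rows $1,2$ of $L$ --- only the one in columns $4,5$ is guaranteed. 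Moreover, the augmented diagonal carries only the \emph{two} symbols $n+1,n+2$, so it cannot ``recover'' the three-symbol corner of $P$ as you propose. Hence your plan of realizing $P$ directly as part of the Smetaniuk completion of a base Latin square breaks down for every prescribed intercalate but one, and no choice of $B$ repairs this.

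The paper's proof avoids the difficulty by first passing to the row-symbol conjugate $P^{(rs)}$: with precisely the normalization you describe (corner $\bigl(\begin{smallmatrix}1&2&3\\2&3&1\end{smallmatrix}\bigr)$ and the $00$-cycles placed as intercalates in consecutive column pairs), the two filled rows of $P$ turn into the symbols $1$ and $2$ filling the augmented forward diagonal of $P^{(rs)}$, while the three filled columns remain three filled columns lying essentially below that diagonal. The bulk of the prescribed data is then matched automatically by the very definition of $T^2$ --- the diagonal by clause (ii), the three columns by encoding them, shifted, in an order-$(n-2)$ base $C$ completed via Theorem~\ref{th:MHall} or Lemma~\ref{lem:PLS} --- and only a bounded residue (the corner entries $s_1,s_2$, split into three cases) requires the delicate devices: one application of Observation~\ref{obs2}, an auxiliary row $q$ and symbol $\alpha$ (this is where $n\geq 13$, i.e.\ $k\geq 3$, is actually used, not for placing many intercalates), and a few swaps on intercalates with symbols $1,2$. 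If you insert this conjugation step, the remainder of your outline collapses into the paper's argument; without it, the approach fails.
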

	\begin{proof}
		In the proof we shall, by slight abuse of terminology,
		for simplicity allow partial Latin squares of
		order $m$ to have a different symbol set than $\{1,\dots,m\}$.
		
		Let $P$ be a partial Latin square satisfying the conditions
		in the theorem; so $n =3+2(k+2)$.
		In particular, $P$ has odd order, since the $(1,2)$-row-permutation
		of $P$ contains one cycle of length $3$ and $k+2$ cycles of length $2$.
		Moreover, by isotopy, we may assume that
		\begin{itemize}
		
		\item $P(1,i)=P(i,1)=i, i=1,\dots,n$,
		
		\item  $P(2,2)=3, P(2,3) =1$, and
		
		\item
		$P(2, 2i) = 2i+1$ and $P(2,2i+1)=2i$ for $i=2,\dots, \frac{n-1}{2}$.
		\end{itemize}
		
		Consider the row-symbol conjugate $P^{(rs)}$ of $P$.
		Since the two first rows of $P$ are completely filled,
		the augmented forward diagonal of $P^{(rs)}$ is completely filled
		with the symbols $1$ and $2$, and
		moreover
		
		\begin{itemize}
			
			\item[(i)] $P(1,1)=1, P(1,2) =s_1, P(1,3) = 2$,
			
			\item[(ii)] $P(2,1)=2, P(2,2)=1, P(2,3)=s_2$, and
			
			\item[(iii)] $P(3,i)=4-i$, for $i=1,2,3$,
		
		\end{itemize}
		where $s_1$ and $s_2$ are some symbols from $\{1,\dots,n\}$.
		
		Now, if $s_1 = s_2 = 3$, then $P$ has a completion by 
		Theorem \ref{th:Haggkvist}. Thus it suffices to consider the
		following cases:
		
		\begin{itemize}
		
		\item[(a)] $s_1$ and $s_2$ are distinct, and $3 \notin \{s_1,s_2\}$,
		
		\item[(b)] $s_1$ and $s_2$ are distinct, and $3 \in \{s_1,s_2\}$,
		
		\item[(c)] $s_1 = s_2 \neq 3$.
		
		\end{itemize}
		
		Suppose first that (a) holds, and assume without loss of generality
		that $s_1 =4$ and $s_2 =5$. We define the partial Latin square
		$C$ of order $n-2$ by letting $(1,1,3), (1,2,4), (1,3,5) \in C$,
		and
		for $i=2,\dots,n-2$
		letting $(i,j,k) \in C$ if and only if $(i+2,j,k) \in P$
		and $(i+2,j)_P$ is not contained in the augmented forward diagonal
		of $P$. Then $C$ is a partial Latin square on the symbols
		$[n] \setminus [2]$, which by Theorem \ref{th:MHall} is completable.
		It thus follows from Theorem \ref{th:T2} that $T^2(C)$ has a
		Smetaniuk completion $A$, where symbols $1,2$ appear in the augmented
		forward diagonal. By possibly making some swaps on intercalates
		with symbols $1$ and $2$ in $A$, we obtain a completion of $P^{(rs)}$;
		so, by conjugacy, $P$ is completable.
		
		Suppose now that (b) holds. By isotopy, we may assume 
		$\{s_1,s_2\} = \{3,4\}$. Suppose e.g. that $s_1=3$ and $s_2=4$
		(the other case is similar).
		With this assumption, it is straightforward that by
		permuting the first three rows and columns, and symbols $3$ and $4$,
		we can obtain, from $P^{(rs)}$ a partial Latin square that satisfies
		conditions (i)-(iii) and (c). 
		We conclude that is suffices to consider the case when
		(c) holds.
		%permutera rader och kolumner (1 2 3) och (1 2 3)

		So assume that (c) holds. Without loss of generality,
		we assume that $s_1 = s_2 =4$. From $P^{(rs)}$, we shall define
		a sequence of partial Latin squares; each partial Latin square
		will contain an isotopism of the previous one, or will be obtained from it
		by a swap on an intercalate.
		We first define a partial Latin square $B_1$ by applying the permutation
		$(4 \,\, 6) (5 \,\, 7)$ to the rows and columns of $P^{(rs)}$ if
		$(i,1,4) \in P^{(rs)}$ for some $i \in \{4,5\}$; otherwise
		we set $B_1 = P^{(rs)}$.
		
		Next, we put $S= \{B_1(4,i), B_1(5,i) : i \in [3] \}$
		and pick a row $q \geq 6$ in $B_1$ such that
		$|\{B_1(q,2), B_1(q,3)\} \cap (S \cup \{3\})| \leq 1$
		and $B_1(q,1) \neq 4$; since $B_1$ has order at least $13$,
		there is such a row $q$. We set $B_2 = B_1 \cup \{q,4,4\}$.
		
		We define the partial Latin square $B_3$ by permuting the rows
		and columns in $B_2$ according to $(1 \,\, 2)$ and $(2 \,\, 3)$, respectively,
		if $B_2(q,3) \in S\cup \{3\}$; otherwise we set $B_3 =B_2$.
		Then $B_3(q,3) \notin S\cup \{3\}$. We put 
		$B_4 =B_3 \cup \{(2,4,B_3(q,3))\}$ and note that
		the set $\{(q,3)_{B_4}, (q,4)_{B_4}, (2,3)_{B_4}, (2,4)_{B_4}\}$
		is an intercalate in $B_4$. We swap on this intercalate to obtain $B_5$.
		
		Next, we pick a symbol $\alpha \notin S \cup \{1,2,3,4, B_5(2,3)\}$;
		since $n \geq 13$ and $$|S \cup \{1,2,3,4,B_5(2,3)\}| \leq 11,$$
		there is indeed such a symbol $\alpha$.
		We set $B_6 = B_5 \cup \{(2,5,\alpha), (3,4,\alpha), (3,5,4)\}$ and note
		that the cells 
		$$\{(2,4)_{B_6}, (2,5)_{B_6}, (3,4)_{B_6}, (3,5)_{B_6}\}$$
		form an intercalate in $B_6$.
		We now permute the rows and columns according to $(1 \, \, 3)$
		and $(1 \,\, 2)$ in $B_6$, respectively, 
		and denote the obtained partial Latin square
		by $B_7$.
		
		From $B_7$ we  define a partial Latin square $C$ of order $n-2$
		by letting $$(1,1,4), (1,2,3), (1,3, B_7(2,3)), (2,4, \alpha), (3,4,4) \in C,$$
		and for $i=2,\dots,n-2$ letting $(i,j,s) \in C$ if and only if
		$(i+2,j,k) \in B_7$ 
		and $(i+2,j)_{B_7}$ is not contained in the augmented forward diagonal
		of $B_7$. 
		Since $\{4,\alpha\} \cap S = \emptyset$,
		$C$ is a partial Latin square on the symbols
		$[n] \setminus [2]$.
		
		Now, since column $4$ in $C$ contains three nonempty cells,		
		Lemma \ref{lem:PLS} implies that $C$ has a completion $C'$.
		We define $C_1$ from $C$ by filling column $4$ in $C$
		as in column $4$ in $C'$, and, in addition, setting
		$C_1(2,5)=4$ and $C_1(3,5)=\alpha$. Then, by construction,
		$C_1$ satisfies the following:
		
		\begin{itemize}
		
			\item row $2$ in $C_1$ contains symbols 
			$B_4(4,1)$, $B_4(4,2)$, $B_4(4,3)$, $\alpha$, $4$,
			which are all distinct by the choice of $\alpha$ and the construction of $B_4$;
			
			\item row $3$ in $C_1$ contains symbols
			$B_4(5,1)$, $B_4(5,2)$, $B_4(5,3)$, $\alpha$, $4$
			which are all distinct;
			
			\item for $i=4,\dots, n-2$, row $i$ of $C_1$
			contain the same symbols as row $i+2$ in $B_4$.
			
		\end{itemize}
		Hence, $C_1$ is a partial Latin square over the symbols
		$[n] \setminus [2]$.
		
		Again, by Lemma \ref{lem:PLS}, there is a completion $C'_1$ of
		$C_1$. Moreover, since $C_1$ is completable, it follows from
		Theorem \ref{th:T2} that $T^2(C_1)$ has a completion $A$ with 
		an augmented forward diagonal with the symbols $1$ and $2$;
		we choose this augmented forward diagonal of
		$A$ so that it agrees with the augmented forward diagonal of $B_7$.
		Hence, the first three columns of $A$ agree with $B_7$.
		Moreover, by Observations \ref{obs1}-\ref{obs2}, 
		the cells in positions $(1,4), (1,5), (2,4), (2,5)$ in $A$
		form an intercalate $F$ on the symbols $\{4,\alpha\}$, as in $B_7$,
		and $A(q,4)= B_7(q,4) = B_4(q,3)$.
		
		Now, if $A(2,4)=\alpha$, then we swap on the intercalate $F$ to obtain
		the partial Latin square $A'$; otherwise if $A(2,4)=4$,
		we set $A'=A$. In $A'$, the set
		$$\{(2,3)_{A'}, (2,4)_{A'}, (q,3)_{A'},(q,4)_{A'}\}$$
		is an intercalate, and by swapping on this intercalate we
		obtain a completion of an isotopism of $B_4$. Now, since an isotopism of
		$P^{(rs)}$ is contained in $B_4$, $P^{(rs)}$ is completable,
		and so,
		$P$ is completable.		
	\end{proof}
	
		Consider a partial Latin square $P$ in $\mathrm{PLS}(2,3;n)$, where 
		$n \geq 8$ and the $2 \times 3$ subarray in the upper left corner
		forms a Latin rectangle.
		By Theorem \ref{th:reduction} one of the following must hold.
		
		\begin{itemize}
		
			\item[(i)] There is a complete reduction $P'$ of $P$ with odd order
			at least $13$.
			
			\item[(ii)] There is a complete reduction $P'$ of $P$ with odd order
			$11$.
			
			\item[(iii)] There is a complete reduction $P'$ of $P$ with odd
			order $9$.
			
			\item[(iv)] $P$ or a partial Latin square obtained from $P$ by successive
			reduction is a partial Latin square of order $8$.
		
		\end{itemize} 
		
		Theorems \ref{th:PLS} and \ref{th:propreduction} 
		implies that every partial Latin square satisfying
		(i) is completable.
		The cases (ii)-(iv) have been settled by a computer search; as it turns
		out, every partial Latin square 
		in $\mathrm{PLS}(2,3;n)$ with cycle type $\{(111,1), (00, k)\}$,
		where $n=3+2k$ and
		$k \in\{3,4\}$, is completable; additionally, this holds for 
		all partial Latin squares
		in $\text{PLS}(2,3;8)$ as well;
		for details, see \cite{Goransson}.
		Hence, by Theorems \ref{th:propreduction} and
		\ref{th:PLS} we have the following.
		
		\begin{corollary}
		\label{cor:main}
			Every partial Latin square of order at least $8$ with
			two filled rows and three filled columns,
			and where the intersection of the filled rows and columns form a Latin 
			rectangle, is completable.
		\end{corollary}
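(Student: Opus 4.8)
The plan is to reduce Corollary \ref{cor:main} to the already-established Theorem \ref{th:PLS}, supplemented by a finite check in small orders, using the reduction machinery of Section 3.1. First I would pin down the cycle type forced by the Latin-rectangle hypothesis. Let $\{a,b,c\}$ be the three symbols filling the $2 \times 3$ intersection of the filled rows and columns, and let $\sigma$ be the $(1,2)$-row-permutation. For each $i \in \{1,2,3\}$ both $P(1,i)$ and $P(2,i)$ lie in $\{a,b,c\}$ and are distinct, since a column cannot repeat a symbol; hence $\sigma$ restricts to a fixed-point-free permutation of $\{a,b,c\}$, i.e. a single $3$-cycle. As $\{a,b,c\}$ is exactly the set of symbols occurring in the upper-left $1 \times 3$ subarray, this cycle has type $111$. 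Because $\sigma$ preserves $\{a,b,c\}$ it preserves its complement as well, so every other cycle consists solely of symbols absent from the upper-left $1 \times 3$ subarray and therefore has an all-zero cycle type; none has length $1$, again by the no-repeat condition in columns. Thus the cycle type of $P$ is $\{(111,1)\}$ together with some all-zero cycles, each of length at least $2$.

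Next I would invoke successive reduction. For $n \geq 9$, Theorem \ref{th:reduction} permits a proper reduction as long as $P$ is not completely reduced, and since the only all-zero type allowed in a completely reduced square is $00$, each reduction shortens an all-zero cycle of length at least $3$ while leaving the type-$111$ cycle untouched (its symbols lie in the intersection and are never the reduced symbol). In particular the number $M$ of all-zero cycles is invariant under reduction, so the process terminates either at order $8$ or at a completely reduced square $P'$ of cycle type $\{(111,1),(00,M)\}$ and odd order $3+2M$. By Theorem \ref{th:propreduction}, $P$ is completable whenever $P'$ is.

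It then remains to complete the reduced squares. If $3+2M \geq 13$, equivalently $M \geq 5$, then $P'$ has cycle type $\{(111,1),(00,k+2)\}$ with $k = M-2 \geq 3$, and Theorem \ref{th:PLS} finishes the argument. The residual cases, namely $P'$ of order $11$ or $9$ and any square reducing to order $8$ (in particular every $P$ with $n=8$, and every $P$ with $M \leq 2$, which forces the reduction to terminate at order $8$), lie outside the range of Theorem \ref{th:PLS} and would be settled by a finite verification that all of $\mathrm{PLS}(2,3;8)$, together with all squares of cycle type $\{(111,1),(00,M)\}$ for $M \in \{3,4\}$, are completable, as recorded in \cite{Goransson}. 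Combining these with Theorem \ref{th:propreduction} gives the corollary.

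The main obstacle I anticipate is not in the logical assembly, which is routine given the earlier results, but in the two places where genuine work is hidden: confirming that the Latin-rectangle condition really collapses the cycle type to a single $111$-cycle plus $00$-cycles and that reduction preserves this shape, so that we land precisely in the family of Theorem \ref{th:PLS}; and the unavoidable finite analysis in orders $8$, $9$, and $11$. The latter is the true bottleneck, since Theorem \ref{th:PLS} only reaches $M \geq 5$, so the small orders cannot be absorbed into the general argument and must be certified separately by exhaustive search.
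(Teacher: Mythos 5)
Your proposal is correct and follows essentially the same route as the paper: the Latin-rectangle hypothesis forces cycle type $\{(111,1)\}$ plus all-zero cycles, successive proper reductions (Theorems \ref{th:reduction} and \ref{th:propreduction}) preserve this shape and terminate at a completely reduced square of odd order $3+2M$ or at order $8$, Theorem \ref{th:PLS} handles orders at least $13$, and the orders $8$, $9$, $11$ are settled by the computer search recorded in \cite{Goransson}. Your write-up in fact makes explicit some details (invariance of the number of all-zero cycles, why the $111$-cycle is never touched) that the paper leaves implicit.
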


\end{document}